\newtheorem{proposition}{Proposition}
\newtheorem{lemma}{Lemma}
\newtheorem{corollary}{Corollary}
\newtheorem{thm}{Theorem}
\newcommand{\CC}{\mathbb C}
\newcommand{\Fp}{{\mathbb F}_p}
\newcommand{\Fpr}{{\mathbb F}_{p^r}}
\newcommand{\Ql}{\bar{\mathbb Q}_\ell}
\newcommand{\Dbc}{D^b_c}
\newcommand{\FF}{\mathcal F}
\newcommand{\GGG}{\mathcal G}
\newcommand{\HH}{\mathcal H}
\newcommand{\LL}{\mathcal L}
\newcommand{\R}{\mathrm R}
\newcommand{\AAA}{\mathbb A}
\newcommand{\GG}{\mathbb G}
\newcommand{\Gm}{{\mathbb G}_m}
\newcommand{\PP}{\mathbb P}
\newcommand{\HHH}{\mathrm H}
\newcommand{\UU}{\mathcal U}
\newcommand{\KK}{\mathcal K}
\newcommand{\Spec}{\mathrm{Spec}}
\newcommand{\ie}{\emph{ie. }}
\title{Explicit local multiplicative convolution of $\ell$-adic sheaves}
\author{Antonio Rojas-Le\'on}
\begin{document}

\maketitle

\renewcommand{\thefootnote}{}
\footnote{Mathematics Subject Classification: 14F20, 11F85, 11T23}
\footnote{Partially supported by MTM2013-46231-P (Ministerio de Econom\'{\i}a y competitividad) and FEDER}

\begin{abstract}
We give explicit formulas for the local multiplicative convolution functors \cite{katz1988gauss,rojas2013local} which express the local monodromies of the convolution of two $\ell$-adic sheaves on the torus $\Gm$ over the algebraic closure of a finite field in terms of the local monodromies of the factors.
\end{abstract}

\section{Introduction}

The effect of global cohomological operations in certain categories on the local properties of the objects on which they operate has been extensively studied. In \cite{laumon1987transformation} Laumon translated the stationary phase principle from functional analysis to Deligne's $\ell$-adic Fourier transform: the local monodromies of the Fourier transform of an $\ell$-adic sheaf on the affine line over a finite field can be determined from those of the original sheaf, via some ``local Fourier transform'' functors. Laumon and Malgrange \cite[2.6.3]{laumon1987transformation} gave conjectural explicit formulas for these functors, which operate on the category of $\ell$-adic representations of the decomposition group of the affine line at a point. These formulas were proved (with some modifications) independently by Fu \cite{fu2010calculation} and Abbes and Saito \cite{abbes2010local}. 

In the category of holonomic $D$-modules on the affine line over the complex numbers, which in many ways behaves like the category of $\ell$-adic sheaves on the affine line over a finite field, there have also been some results in this direction: Bloch and Esnault \cite{bloch2004local} and Garc\'{\i}a L\'opez \cite{lopez2004microlocalization} defined the local Fourier transform functors for D-modules, showing that the local monodromies at the singular points of the Fourier transform of a holonomic $D$-module $M$ are determined by those of $M$. Fang \cite{fang2009calculation} and Sabbah \cite{sabbah2007explicit} gave explicit formulas for these, similar to the ones for the $\ell$-adic case.

In this article we will consider the multiplicative convolution operation on the category of sheaves on the one-dimensional torus over a finite field (note that, since the Fourier transform interchanges additive convolution and tensor product, the formulas for the local Fourier transform immediately give formulas for the local additive convolution, as already noted by Laumon in \cite[2.7]{laumon1987transformation}). In \cite{katz1988gauss}, Katz proves that the convolution of two smooth sheaves on $\Gm$ with tamely ramified monodromy at $0$ and totally wild monodromy at infinity is another sheaf of the same form, and the local monodromies of the convolution can be determined from those of the factors. In \cite{rojas2013local} we extend this to general (perverse) sheaves on $\Gm$: there exist ``local covolution'' functors that give us the monodromies of the convolution of two objects at any point in terms of those of the factors. Here we will give explicit formulas for these functors, similar to the ones 
given in \cite{fu2010calculation} for the local Fourier transform (which are in fact a particular case of these, as we will see in the last section).

Throughout this article, $p$ will be an odd prime, and $k=\overline{{\mathbb F}_p}$ the algebraic closure of the prime field ${\mathbb F}_p$. We will fix another prime $\ell\neq p$, and let ${\mathcal S}(\Gm,\Ql)$ be the category of constructible $\Ql$-sheaves on the one-dimensional torus $\Gm:={\mathbb G}_{m,k}$ and $\Dbc(\Gm,\Ql)$ the corresponding derived category. 

Let $I_0$ and $I_\infty$ denote the inertia groups of $\Gm$ at $0$ and $\infty$ respectively, which are isomorphic to the Galois groups of the henselizations of $k[t]$ at the ideal $(t)$ and of $k[t^{-1}]$ at $(t^{-1})$. We have an exact sequence of groups
$$
1\to P_0\to I_0\to I_0^{tame}\to 1
$$
where $P_0$ is a pro-$p$ group (the wild inertia group) and $I_0^{tame}\cong\prod_{\ell\neq p}{\mathbb Z}_\ell$, and similarly for $I_\infty$. Every sheaf $\FF\in{\mathcal S}(\Gm,\Ql)$ induces continuous $\Ql$-representations of $I_0$ and $I_\infty$, called the local monodromies of $\FF$ at $0$ and $\infty$. We will denote these representations by $\FF_0$ and $\FF_\infty$, or simply by $\FF$ if no confusion can arise.

 To every finite extension $\Fp\subseteq\Fpr$ and multiplicative character $\chi:\Fpr^\times\to\CC^\times$ corresponds a 1-dimensional smooth sheaf $\LL_\chi\in{\mathcal S}({\mathbb G}_{m,\Fpr},\Ql)$ (the Kummer sheaf, \cite[1.4-1.8]{deligne569application}). By extension of scalars to $k$, this gives tamely ramified characters of $I_0$ and $I_\infty$, also denoted by $\LL_\chi$. Every character of $I_0$ or $I_\infty$ of finite order prime to $p$ is isomorphic to one of these. Every tamely ramified (\ie trivial on $P_\infty$) continuous $\Ql$-representation of $I_\infty$ is a direct sum of representations of the form $\KK_{\chi,n}:=\LL_\chi\otimes\UU_n$, where $\chi$ is one such character and $\UU_n$ is the unique (up to isomorphism) unipotent indecomposable representation of dimension $n$ (by taking the Jordan decomposition of the action of a topological generator of $I_\infty^{tame}$). If $\xi:\Fpr^\times\to\CC^\times$ is another character, then $\LL_{\chi\xi}\cong\LL_\chi\otimes\LL_\xi$.

 Let also $\psi:\Fp\to\CC^\times$ be the additive character $\psi(t)=\exp(2\pi it/p)$ and $\LL_\psi\in{\mathcal S}(\AAA^1_k,\Ql)$ the corresponding Artin-Schreier sheaf \cite[1.4-1.8]{deligne569application}. Given a sheaf $\FF\in{\mathcal S}(\Gm,\Ql)$ and a $k$-morphism $h:X\to\Gm$, we will denote by $\FF(h)$ the pull-back sheaf $h^\ast\FF$ on $X$. In particular, given a polynomial $f\in k[t]$ of degree prime to $p$ we can consider the sheaves $\LL_\psi(f)$ and $\LL_\psi(f(1/t))$ on $\Gm$, and their induced representations of $I_\infty$ and $I_0$ respectively. They are characters of slope $\deg(f)$.

 For every positive integer $d$ prime to $p$, the $d$-th power map $[d]:\Gm\to\Gm$ induces injective homomorphisms $I_0\to I_0$ and $I_\infty\to I_\infty$, that can be used to identify $I_\infty$ and $I_0$ with their unique closed subgroups $I_\infty^d$, $I_0^d$ of index $d$. Given a sheaf $\FF\in{\mathcal S}(\Gm,\Ql)$, the pull-back and push-forward of $\FF$ by $[d]$ correspond to restricting the representation $\FF_\infty$ to $I_\infty^d$ and taking the induced representation
of $\FF_\infty$ from $I_\infty^d$ to $I_\infty$. We will denote these representations by $[d]^\ast\FF$ and $[d]_\ast\FF$ respectively.

 Given two objects $K,L\in\Dbc(\Gm,\Ql)$, their \emph{convolution} is defined to be the object
$$
K\ast L:=\R\mu_!(K\boxtimes L)\in\Dbc(\Gm,\Ql)
$$
where $\mu:\Gm\times\Gm\to\Gm$ is the multiplication map. If $K=\FF[1]$ and $L=\GGG[1]$, where $\FF,\GGG\in {\mathcal S}(\Gm,\Ql)$ are smooth sheaves which are tamely ramified at zero and totally wild (\ie with no $P_\infty$-fixed elements) at infinity, then $K\ast L={\mathcal H}[1]$ is another object of the same form \cite[Theorem 5.1]{katz1988gauss}. Moreover, the local monodromy of $\mathcal H$ at infinity depends only of those of $\FF$ and $\GGG$: there exists a bi-exact functor $LC_{(\infty,\infty)}^{\infty}:{\mathcal R}^w_\infty\times{\mathcal R}^w_\infty\to{\mathcal R}^w_\infty$ (where ${\mathcal R}^w_\infty$ is the category of totally wild continuous $\Ql$-representations of $I_\infty$) such that the monodromy of $\mathcal H$ at infinity is given by $LC_{(\infty,\infty)}^{\infty}(\FF_\infty,\GGG_\infty)$ \cite[chapter 6]{katz1988gauss}.

More generally, if $K=\FF[1]$, $L=\GGG[1]$ are irreducible perverse objects (where $\FF,\GGG\in {\mathcal S}(\Gm,\Ql)$ are irreducible middle extension sheaves) \cite[8.1]{katz1990esa} and $K\ast L={\mathcal H}[1]$ with ${\mathcal H}\in {\mathcal S}(\Gm,\Ql)$, then there are bi-exact functors $LC_{(a,b)}^{c}:{\mathcal R}_a\times{\mathcal R}_b\to{\mathcal R}_c$ for every $(a,b,c)\in(\PP^1_k)^3$ in the closure $\overline Z$ of $Z:=\{(x,y,xy)|x,y\in k\}$ such that the local monodromy (the wild part if $c=0$ or $\infty$) of $\mathcal H$ at $c$ is the direct sum of $LC_{(a,b)}^{c}(\FF_a,\GGG_b)$ for every $(a,b)$ such that $(a,b,c)\in \overline Z$ \cite[Theorems 9,17]{rojas2013local}.

In this article we give explicit formulas for these local convolution functors for a wide class of representations (which include those that arise from arithmetic or geometric applications). Namely, we consider representations of $I_\infty$ if the form $[a]_\ast(\LL_{\psi}(f)\otimes\KK_{\chi,n})$, where $a$ is a prime to $p$ integer, $f\in k[t]$ is a polynomial of degree $d$ prime to $p$ and $\chi:\Fpr^\times\to\CC^\times$ is a multiplicative character for some $r\geq 1$. Even though not every continuous $\Ql$-representation of $I_\infty$ is of this form, most interesting ones are. See \cite[Proposition 0.5]{fu2010calculation} for a discussion on this topic.

From the construction of the different $LC_{(a,b)}^{c}$ functors in \cite{rojas2013local}, we see that all of them can be defined from $LC_{(\infty,\infty)}^{\infty}$ and $LC_{(0,\infty)}^{\infty}$ (from these two one can construct the local Fourier transform functors as special cases \cite[Proposition 8.1.12]{katz1990esa}, and then define all other local convolution functors by recursive composition of these two with the inversion and the local Fourier transform functors). So we will focus on these two, and explain in the last section how to derive formulas for the remaining ones from these.

Let $\FF=[a]_\ast(\LL_{\psi}(f)\otimes\KK_{\chi,n})$ and $\GGG=[b]_\ast(\LL_\psi(g)\otimes\KK_{\xi,m})$ where $a,b,n,m$ are positive integers, $a,b$ are prime to $p$, $f,g\in k[x]$ are polynomials of degrees $d,e$ prime to $p$, and $\chi,\xi$ are multiplicative characters of some finite extension of $\Fp$. Our main results provide explicit formulas for the representations $LC_{(\infty,\infty)}^{\infty}(\FF,\GGG)$ and $LC_{(0,\infty)}^{\infty}(\iota^\ast\FF,\GGG)$ (where $\iota:\Gm\to\Gm$ is the inversion map). We will assume that $a$ and $b$ are relatively prime, since $[r]_\ast(K\ast L)\cong([r]_\ast K)\ast([r]_\ast L)$ for every $r\geq 1$ and $K,L\in\Dbc(\Gm,\Ql)$ \cite[Theorem 5.1(10)]{katz1988gauss}. Let $c$ be the g.c.d. of $d$ and $e$, and write $d=cd'$, $e=ce'$ so that $d'$ and $e'$ are relatively prime.

We construct the following Laurent polynomial $H(z,t)\in k[[t^{-1}]][z,z^{-1}]$:
$$
H(z,t)=t^{-de/c}\left(f(t^{e'}z^b)+g(t^{d'}z^{-a})\right)
$$
Its reduction modulo $t^{-1}$ is given by
$$
\tilde H(z)=f_dz^{bd}+g_ez^{-ae}=z^{-ae}(f_dz^{bd+ae}+g_e)\in k[z,z^{-1}]
$$
where $f_d$ and $g_e$ are the leading coefficients of $f$ and $g$. Assume that $bd+ae$ is prime to $p$. Then the derivative of $H(z)$
$$
\tilde H'(z)=z^{-ae-1}(bdf_dz^{bd+ae}-aeg_e)
$$
has $bd+ae$ simple roots in $k^\times$, $\alpha_0,\ldots,\alpha_{bd+ae-1}$ where $\alpha_i=\alpha_0\zeta^i$, $\zeta\in k$ being a primitive $(bd+ae)$-th root of unity. By Hensel's lemma, each of them can be lifted to a root $z_i(t^{-1})$ of $\frac{\partial}{\partial z}H(z,t)$ in $k[t^{-1}]^h_{(t^{-1})}$ (the henselization of the localization of $k[t^{-1}]$ at the ideal $(t^{-1})$) such that $z_i(t^{-1})\equiv\alpha_i$ modulo $t^{-1}$. Let
$$
h_i(t)=t^{de/c}H(z_i(t^{-1}),t)\in t^{de/c}\cdot k[[t^{-1}]]
$$
for $0\leq i\leq bd+ae-1$.

\begin{thm}\label{t1}
 Suppose that $a,b,d,e$ and $bd+ae$ are prime to $p$. Then
$$
LC_{(\infty,\infty)}^{\infty}(\FF,\GGG)\cong [bd'+ae']_\ast\left(\bigoplus_{i=0}^{c-1}\LL_\psi(h_i)\otimes\LL_{\rho^{de}}\otimes\LL_{\chi^{e'}\xi^{d'}}\otimes\UU_n\otimes\UU_m\right)
$$
where $\rho$ is the order two character of $I_\infty$.
\end{thm}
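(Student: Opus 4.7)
The plan is to perform a geometric stationary phase analysis, adapting Fu's computation of the local Fourier transform \cite{fu2010calculation} to this multiplicative setting. Set $K=\LL_\psi(f)\otimes\KK_{\chi,n}$ and $L=\LL_\psi(g)\otimes\KK_{\xi,m}$, so that $\FF=[a]_\ast K$ and $\GGG=[b]_\ast L$. Proper base change along $[a]\times[b]\colon\Gm^2\to\Gm^2$ yields $\FF\ast\GGG\cong\R\nu_!(K\boxtimes L)$ with $\nu(s,r)=s^ar^b$. The key geometric input is the substitution
$$
\pi\colon \Gm^2_{(z,t)}\longrightarrow\Gm^2_{(s,r)},\qquad (z,t)\mapsto(t^{e'}z^b,\,t^{d'}z^{-a}).
$$
Since $\gcd(a,b)=1$ and $bd'+ae'$ is prime to $p$, $\pi$ is a connected étale Galois cover of degree $bd'+ae'$ with group $\mu_{bd'+ae'}$. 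The Bezout identity $au+bv=1$ shows that the square $\nu\circ\pi=[bd'+ae']\circ p_t$ is Cartesian (with $p_t$ the projection on $t$), so by proper base change
$$
[bd'+ae']^\ast(\FF\ast\GGG)\cong\R(p_t)_!\pi^\ast(K\boxtimes L),
$$
and
$$
\pi^\ast(K\boxtimes L)\cong\LL_\psi\bigl(t^{de/c}H(z,t)\bigr)\otimes\LL_\chi(t^{e'}z^b)\otimes\LL_\xi(t^{d'}z^{-a})\otimes\pi^\ast(\UU_n\boxtimes\UU_m).
$$

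The next step is to analyse this pushforward in the formal neighbourhood of $t=\infty$ via the relative principle of stationary phase, as in \cite[\S 1]{fu2010calculation}. The phase $t^{de/c}H(z,t)$ is a Laurent polynomial in $z$ whose leading term $\tilde H(z)$ has, by the hypothesis that $bd+ae$ is prime to $p$, simple critical points lifted by Hensel's lemma to the sections $z_i(t^{-1})$. The stationary phase decomposition writes $\R(p_t)_!\pi^\ast(K\boxtimes L)$ at $\infty$ as a direct sum over $i=0,\dots,bd+ae-1$, with $i$-th summand
$$
\LL_\psi(h_i)\otimes\LL_\chi(t^{e'}z_i^b)\otimes\LL_\xi(t^{d'}z_i^{-a})\otimes\UU_n\otimes\UU_m\otimes\LL_\rho\Bigl(t^{de/c}\tfrac{\partial^2 H}{\partial z^2}(z_i,t)\Bigr).
$$
Because each $z_i(t^{-1})$ is a unit at $\infty$, the two Kummer factors collapse on $I_\infty$ to $\LL_{\chi^{e'}\xi^{d'}}$. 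Moreover, a direct calculation gives $\tfrac{\partial^2\tilde H}{\partial z^2}(z_0)=z_0^{-ae-2}g_e\cdot ae(bd+ae)$ at any root $z_0$ of $\tilde H'$, so the Gauss-sum twist reduces on $I_\infty$ to $\LL_{\rho^{de/c}}$, which coincides with $\LL_{\rho^{de}}$ by a short parity check (the difference of exponents $(c-1)de/c$ is always even, since $c$ even forces $de/c$ even).

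To finish, one descends from the $t$-cover back to $\Gm$. The Galois group $\mu_{bd'+ae'}$ of $\pi$ acts freely on the set of $bd+ae$ critical sections (freeness uses $\gcd(d',e')=1$), partitioning them into $c$ orbits of size $bd'+ae'$, and the associated stationary-phase summands are permuted accordingly. Descending therefore amounts to a $[bd'+ae']_\ast$-induction of one representative from each orbit, producing the direct sum $\bigoplus_{i=0}^{c-1}$ of the claimed formula. The main technical obstacle lies in this last step: one must verify that the stationary-phase decomposition is $\mu_{bd'+ae'}$-equivariant in the expected way, so that descent from $[bd'+ae']^\ast(\FF\ast\GGG)$ back to $\FF\ast\GGG$ correctly outputs $[bd'+ae']_\ast$ of the stated tame-plus-exponential representation. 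All other ingredients -- the coordinate change, the Kummer-character collapse, the Hessian computation, and the parity check -- are direct bookkeeping using the explicit formulas collected above.
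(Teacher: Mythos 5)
Your overall strategy coincides with the paper's: the substitution $(z,t)\mapsto(t^{e'}z^b,t^{d'}z^{-a})$ (the paper reaches the same coordinates via the automorphism $\phi(w,t)=(w^bt^\alpha,w^{-a}t^\beta)$ followed by $w\mapsto wt^{\alpha d'-\beta e'}$ after pulling back by $[bd'+ae']$), a stationary-phase analysis of the resulting phase $t^{de/c}H(z,t)$ at $t=\infty$, the collapse of the Kummer factors to $\LL_{\chi^{e'}\xi^{d'}}$, the $\mu_{bd'+ae'}$-equivariant descent with the free action on the critical sections, and the parity identification $\rho^{de/c}=\rho^{de}$ are all exactly the steps of the paper. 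The problem is that the step you pass off as a known ``relative principle of stationary phase'' is precisely the core of the paper's proof (its Lemma~1), and it cannot be cited from Fu's \S 1: Fu proves such a decomposition only for the specific phases arising in the local Fourier transforms, and his Lemma~1.3 supplies only one ingredient (the local square structure of the phase at a nondegenerate critical point). What has to be proved here is that $\R^i\Phi(\HH)=0$ for $i\neq 1$, that $\R^1\Phi(\HH)$ is supported exactly at the $bd+ae$ points $(\alpha_i,\infty)$, and that each stalk is $\LL_\psi(h_i)\otimes\LL_{\rho^{de/c}}\otimes\UU_n\otimes\UU_m$ of dimension $mn$. The paper does this by pushing forward along $\theta=H(z,t)$, using the universal strong local acyclicity of $\LL_\psi(tu)$ to locate the support, a $\pi_1$-argument to replace $\UU_n(z^bt^{e'})\otimes\UU_m(z^{-a}t^{d'})$ by $\UU_n(t)\otimes\UU_m(t)$ near each critical section, and the identification of the quadratic local term with ${\mathcal F}^{(0,\infty)}_\psi(\LL_\rho)\cong\LL_\rho$; even then these constructions only yield \emph{injections} of the expected summands into the vanishing-cycle stalks, and the proof is completed by the a priori dimension count $\dim LC^\infty_{(\infty,\infty)}(\FF,\GGG)=mn(bd+ae)$ and the concentration in degree one from Katz. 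Your proposal contains none of this: in particular you never explain how the unipotent factors are controlled, nor why there are no further contributions (wild parts at the $\alpha_i$, or terms at $u=\infty$) beyond the naive critical-point sum, and without the dimension input there is no way to conclude the decomposition is exactly the one you wrote.

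A smaller point: you identify the ``main technical obstacle'' as the $\mu_{bd'+ae'}$-equivariance of the decomposition and the descent to $[bd'+ae']_\ast$. In the paper this step is comparatively routine: the lifted action $\zeta\cdot z=\zeta^{\alpha d'-\beta e'}z$ (which is exactly the deck action of your cover $\pi$) visibly preserves the sheaf, it permutes the stalks $\R^1\Phi(\HH)_{(\alpha_i,\infty)}$ by $i\mapsto i+\alpha d-\beta e$, and freeness follows from $\gcd(bd'+ae',\alpha d'-\beta e')=1$, which is your $\gcd(d',e')=1$ remark. So the descent is fine, but the weight of the proof lies in the local computation of the vanishing cycles, which your argument assumes rather than establishes. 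Your Hessian computation and the parity check are correct and agree with the paper.
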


The local convolution $LC_{(0,\infty)}^{\infty}(\iota^\ast\FF,\GGG)$ is zero if the slope of $\iota^\ast\FF$ at zero $(d/a)$ is less than or equal to the slope of $\GGG$ at infinity $(e/b)$, by \cite[Proposition 13]{rojas2013local}. So let us assume that $bd>ae$. We construct the Laurent polynomial in $k[[t^{-1}]][z,z^{-1}]$:
$$
H(z,t)=t^{-de/c}\left(f(t^{e'}z^{-b})+g(t^{d'}z^{-a})\right)
$$
whose reduction modulo $t^{-1}$ is given by
$$
\tilde H(z)=f_dz^{-bd}+g_ez^{-ae}=z^{-bd}(f_d+g_ez^{bd-ae}).
$$ 
If $bd-ae$ is prime to $p$, the derivative of $\tilde H(z)$
$$
\tilde H'(z)=-z^{-bd-1}(bdf_d+aeg_ez^{bd-ae})
$$
has $bd-ae$ simple roots in $k^\times$, $\alpha_0,\ldots,\alpha_{bd-ae-1}$ where $\alpha_i=\alpha_0\zeta^i$, $\zeta\in\bar k$ being a primitive $(bd-ae)$-th root of unity. By Hensel's lemma, each of them can be lifted to a root $z_i(t^{-1})$ of $\frac{\partial}{\partial z}H(z,t)$ in $k[t^{-1}]^h_{(t^{-1})}$ such that $z_i(t^{-1})\equiv\alpha_i$ modulo $t^{-1}$. Let
$$
h_i(t)=t^{de/c}H(z_i(t^{-1}),t)\in t^{de/c}\cdot k[[t^{-1}]]
$$
for $0\leq i\leq bd-ae-1$.
 
\begin{thm}\label{t2}
 Suppose that $a,b,d,e$ and $bd-ae$ are prime to $p$. Then
$$
LC_{(0,\infty)}^{\infty}(\iota^\ast\FF,\GGG)\cong [bd'-ae']_\ast\left(\bigoplus_{i=0}^{c-1}\LL_\psi(h_i)\otimes\LL_{\rho^{de}}\otimes\LL_{\chi^{e'}\xi^{d'}}\otimes\UU_n\otimes\UU_m\right)
$$
where $\rho$ is the order two character of $I_\infty$.
\end{thm}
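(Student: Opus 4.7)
My approach would parallel the proof of Theorem \ref{t1}, adapted to the geometric situation where one input lives on $I_0$ and the other on $I_\infty$. First, since inversion commutes with $[a]_\ast$, one writes $\iota^\ast\FF=[a]_\ast\bigl(\iota^\ast(\LL_\psi(f)\otimes\KK_{\chi,n})\bigr)$; combined with the identity $[r]_\ast(K\ast L)\cong[r]_\ast K\ast[r]_\ast L$ from \cite[Theorem 5.1(10)]{katz1988gauss} and a Mackey-type decomposition for $LC_{(0,\infty)}^\infty$ of induced representations, this reduces the problem to an explicit computation after pullback by a suitable power map (roughly $[ab]$ or $[bd'-ae']$, exploiting that $\gcd(a,b)=1$).

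The next step is to realize $LC_{(0,\infty)}^\infty(\iota^\ast\FF,\GGG)$ as the wild part at infinity of a higher direct image, following \cite[Theorems 9,17]{rojas2013local}. The contributions come from pairs $(x,y)$ with $xy=t$, $x\to 0$ and $y\to\infty$ as $t\to\infty$. To balance the slopes $d/a$ of $\iota^\ast\FF$ at $0$ and $e/b$ of $\GGG$ at $\infty$, I would parametrize the fiber by a variable $z$ via $x=t^{e'}z^{-b}$, $y=t^{d'}z^{-a}$. The sign change $z^{-b}$ (compared to $z^{b}$ in Theorem \ref{t1}) reflects the fact that here $x$ must tend to $0$ rather than $\infty$. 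Under this substitution the phase $f(x)+g(y)$ becomes exactly $t^{de/c}H(z,t)$ for the $H$ in the statement, with leading part $\tilde H(z)=f_dz^{-bd}+g_ez^{-ae}$.

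The heart of the proof is the $\ell$-adic stationary phase computation. The critical points of $\tilde H(z)$ are the $bd-ae$ simple roots $\alpha_i$ of $\tilde H'(z)$, lifting by Hensel's lemma to roots $z_i(t^{-1})$ in $k[t^{-1}]^h_{(t^{-1})}$. Each critical point contributes a local summand at infinity: the value $h_i(t)=t^{de/c}H(z_i(t^{-1}),t)$ gives the $\LL_\psi(h_i)$ factor; the second derivative $\partial_z^2H$ at $z_i$ produces the quadratic Gauss-sum factor $\LL_{\rho^{de}}$ (the exponent $de$ reflecting the parity of the Hessian together with the $z^{-ae}$ monomial factor in $\tilde H'$); the Kummer factors $\LL_\chi,\LL_\xi$ pulled back through $x=t^{e'}z^{-b}$ and $y=t^{d'}z^{-a}$ combine to $\LL_{\chi^{e'}\xi^{d'}}$; and the unipotent Jordan blocks give $\UU_n\otimes\UU_m$. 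The $bd-ae=c(bd'-ae')$ critical points partition into $c$ Galois orbits of size $bd'-ae'$ under the relevant inertia action, which accounts for the outer $[bd'-ae']_\ast$ applied to the direct sum of $c$ summands.

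The main obstacle will be the careful bookkeeping in the stationary phase step: verifying that the substitution really intertwines with $LC_{(0,\infty)}^\infty$ rather than a nearby functor, deriving the Kummer and unipotent contributions with the correct exponents $e'$ and $d'$, and computing the quadratic contribution via the $\ell$-adic analogue of the Hasse-Davenport relation to pin down the exponent $de$ of $\rho$. Equally delicate is the Galois-orbit analysis when $c>1$, where one must check that the $bd-ae$ critical points really do break into exactly $c$ orbits of equal size $bd'-ae'$ under the $I_\infty$-action on $k^\times$ compatible with the $(bd+ae)$-th roots of unity parametrizing the $\alpha_i$.
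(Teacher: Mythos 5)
Your plan is essentially the paper's own proof: the same substitution $x=t^{e'}z^{-b}$, $y=t^{d'}z^{-a}$ reducing $[bd'-ae']^\ast LC_{(0,\infty)}^{\infty}(\iota^\ast\FF,\GGG)$ to cohomology of $\LL_\psi(t^{de/c}H(z,t))$ twisted by Kummer and unipotent factors, the same vanishing-cycles stationary phase at the $bd-ae$ simple critical points of $\tilde H$, and the same descent via the free action of the $(bd'-ae')$-th roots of unity giving $c$ orbit representatives and the outer $[bd'-ae']_\ast$. Only minor slips: the $\alpha_i$ here are $(bd-ae)$-th (not $(bd+ae)$-th) roots of unity times $\alpha_0$, and the paper nails the local contributions not by a Hasse--Davenport or Mackey argument but by exhibiting injections (via Fu's local square lemma and Laumon's local Fourier transform of $\LL_\rho$) and matching the total dimension $mn(bd-ae)$ supplied by \cite[Proposition 13]{rojas2013local}.
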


We will prove Theorems \ref{t1} and \ref{t2} in sections 2 and 3 respectively. In section 4 we will see how one can deduce formulas for the local Fourier transforms and the other local colvolution functors from these two.

\section{The local convolution $LC_{(\infty,\infty)}^{\infty}$}

In this section we will prove Theorem \ref{t1} about the local convolution $LC_{(\infty,\infty)}^{\infty}(\FF,\GGG)$. We will first compute its restriction to the subgroup of index $bd'+ae'$ of $I_\infty$.

\begin{proposition}\label{p1} With the notation defined in the previous section, suppose that $bd+ae$ is prime to $p$. Then
 $$
[bd'+ae']^\ast LC_{(\infty,\infty)}^{\infty}(\FF,\GGG)\cong\bigoplus_{i=0}^{bd+ae-1}(\LL_{\psi(h_i)}\otimes\LL_{\rho^{de/c}}\otimes\LL_{\chi^{e'}\xi^{d'}}\otimes \UU_n\otimes \UU_m)
$$
\end{proposition}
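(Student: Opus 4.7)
The plan is to compute $[bd'+ae']^{\ast} LC_{(\infty,\infty)}^{\infty}(\FF,\GGG)$ by reducing the convolution to a one-parameter integral and applying the principle of stationary phase. Set $N := bd'+ae'$ and write $\FF_1 := \LL_\psi(f)\otimes\KK_{\chi,n}$ and $\GGG_1 := \LL_\psi(g)\otimes\KK_{\xi,m}$, so that $\FF = [a]_{\ast}\FF_1$ and $\GGG = [b]_{\ast}\GGG_1$. The identity $\mu\circ([a]\times[b]) = \mu_{ab}$, where $\mu_{ab}(x,y) := x^ay^b$, combined with the projection formula and properness of $[a]\times[b]$ yields
$$\FF\ast\GGG \cong \R\mu_{ab,!}(\FF_1\boxtimes\GGG_1).$$

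Next I would introduce the map $\phi\colon\Gm\times\Gm\to\Gm\times\Gm$ defined by $\phi(t,s) = (t^{e'}s^b,t^{d'}s^{-a})$. Since $bd'+ae' = N$, one has $\mu_{ab}\circ\phi(t,s) = t^{N}$, and because $\gcd(a,b)=1$ Bezout's identity furnishes an explicit inverse to the induced map from $\Gm^2$ to the fibre product $\Gm^2\times_{\Gm,[N]}\Gm$, making the square with first projection $\pi_t$, $\mu_{ab}$ and $[N]$ Cartesian. Proper base change then gives
$$[N]^{\ast}(\FF\ast\GGG) \cong \R\pi_{t,!}\phi^{\ast}(\FF_1\boxtimes\GGG_1),$$
and by multiplicativity of Kummer and Artin-Schreier sheaves (and using $de'=ed'=de/c$) the right-hand side unpacks as
$$\R\pi_{t,!}\left(\LL_\psi(t^{de/c}H(s,t))\otimes\LL_{\chi^{e'}\xi^{d'}}(t)\otimes\LL_{\chi^b\xi^{-a}}(s)\otimes\UU_n(t^{e'}s^b)\otimes\UU_m(t^{d'}s^{-a})\right).$$

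Now I would apply the $\ell$-adic principle of stationary phase (in the spirit of Laumon's analysis of the local Fourier transform) to the pushforward by $\pi_t$, localised at $t=\infty$. Since $bd+ae$ is prime to $p$, the $s$-critical points of the leading phase $\tilde H$ are the $bd+ae$ simple roots $\alpha_i$, and Hensel's lemma lifts them to critical sections $s = z_i(t^{-1})$ of $F(t,s) := t^{de/c}H(s,t)$ near $t=\infty$. The local monodromy at $\infty$ of $\R\pi_{t,!}$ decomposes as a direct sum of contributions from these $bd+ae$ sections. At $\alpha_i$ the contribution is the tensor product of: (i) the exponential of the critical value $\LL_\psi(F(t,z_i(t^{-1}))) = \LL_\psi(h_i)$; (ii) the quadratic Gauss factor $\LL_\rho(F''_{ss}(t,z_i(t^{-1})))$, which restricts to $\LL_{\rho^{de/c}}$ on $I_\infty$ because $F''_{ss}(t,z_i(t^{-1})) = t^{de/c}(\tilde H''(\alpha_i)+O(t^{-1}))$ is $t^{de/c}$ times a unit in $k[[t^{-1}]]$; (iii) the unchanged factor $\LL_{\chi^{e'}\xi^{d'}}(t)$; (iv) the Kummer piece $\LL_{\chi^b\xi^{-a}}(z_i(t^{-1}))$, which is trivial as an $I_\infty$-representation since $z_i(\infty) = \alpha_i \in k^{\times}$; and (v) $\UU_n(t^{e'}z_i^b(t^{-1}))$ and $\UU_m(t^{d'}z_i^{-a}(t^{-1}))$, which reduce to $\UU_n$ and $\UU_m$ as $I_\infty$-representations because $d',e'$ are prime to $p$ and the unit parts are smooth at $\infty$.

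The main obstacle is the rigorous application of stationary phase: one must show (a) that the local monodromy at $\infty$ of $\R\pi_{t,!}$ receives no contribution from the complement of the critical sections (so the decomposition into $bd+ae$ summands is genuine), and (b) that the contribution of each critical section is captured entirely by the leading-order Taylor expansion of $F$, i.e.\ that the higher-order terms in $(s-z_i(t^{-1}))$ beyond the quadratic Hessian term do not modify the local $I_\infty$-representation. Both of these reduce, after localising transversally to the critical section, to instances of Laumon's stationary-phase theorem applied to the local Fourier transform of $\LL_\psi$ of a non-degenerate quadratic form in one variable, together with careful bookkeeping of the quadratic character contribution responsible for the $\LL_{\rho^{de/c}}$ factor.
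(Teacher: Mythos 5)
Your reduction step is correct and is essentially the paper's: the Cartesian square built from Bezout's identity (the paper packages it as the automorphism $\phi(w,t)=(w^bt^\alpha,w^{-a}t^\beta)$ followed by proper base change along $[bd'+ae']$ and the substitution $w\mapsto wt^{\alpha d'-\beta e'}$ over $\eta$) does bring $[bd'+ae']^\ast(\FF\ast\GGG)$ to $\R^1\pi_{2!}$ of $\LL_\psi(t^{de/c}H(z,t))\otimes\LL_{\chi^{e'}\xi^{d'}}(t)\otimes\LL_{\chi^b\xi^{-a}}(z)\otimes\UU_n(z^bt^{e'})\otimes\UU_m(z^{-a}t^{d'})$, exactly as in the paper.

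The genuine gap is everything you label ``the main obstacle'': there is no off-the-shelf $\ell$-adic stationary-phase theorem for this one-parameter family to which points (a) and (b) ``reduce''; Laumon's theorem concerns the Fourier transform on $\AAA^1$, and making it apply here is precisely the content of the paper's Lemma 3, which your proposal does not reconstruct. Concretely, three mechanisms are missing. First, the support statement (a) is obtained in the paper not by quoting stationary phase but by pushing the vanishing-cycles computation forward along the finite map $\theta$ defined by $H(z,t)$ itself and invoking universal strong local acyclicity of $\LL_\psi(tu)$ with respect to $\bar\pi_2$; without some such argument your claimed decomposition into $bd+ae$ local summands is unjustified. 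Second, for (b) the quadratic ``Gauss factor'' $\LL_\rho$ cannot simply be read off the Hessian: the paper produces it by Fu's formal Morse lemma (writing $H-\tilde h_i$ as $\omega\circ\phi$ with $\omega(z)=z^2$, so $\theta_{i\ast}\Ql\cong\Ql\oplus\LL_\rho(u-\tilde h_i)$) and then identifies $\R^1\Phi(\LL_\psi(tu)\otimes\LL_\rho(u))_{(0,\infty)}$ with Laumon's local Fourier transform ${\mathcal F}^{(0,\infty)}_\psi(\LL_\rho)\cong\LL_\rho$, the factor $\rho^{de/c}$ arising via $[de/c]^\ast$ and the base-change theorem for vanishing cycles. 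Third, and most importantly, this local analysis only yields \emph{injections} of $\LL_\psi(h_i)\otimes\LL_{\rho^{de/c}}\otimes\UU_n\otimes\UU_m$ into $\R^1\Phi(\HH)_{(\alpha_i,\infty)}$; the paper closes the argument with the global dimension count $\dim LC^{\infty}_{(\infty,\infty)}(\FF,\GGG)=mn(bd+ae)$ from Katz to force these injections to be isomorphisms. Your proposal contains no mechanism for ruling out extra contributions at the critical points (e.g.\ from the complementary summand of $\theta_{i\ast}$ of the Kummer and unipotent pieces), so the exact identification of each local term is asserted rather than proved. A minor further correction: the isomorphisms $\UU_n(z^bt^{e'})\cong\UU_n(t)$ near $(\alpha_i,\infty)$ follow from the uniqueness of the indecomposable unipotent representation of given dimension (restriction to a finite-index subgroup of $I_\infty$ stays indecomposable unipotent), not from $d',e'$ being prime to $p$.
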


We will view $\FF$ and $\GGG$ indistinctly as smooth sheaves on $\Gm$ or as representations of $I_\infty$. If $(\FF[1])\ast(\GGG[1])\cong{\mathcal H}[1]$, we will write by abuse of language ${\mathcal H}=\FF\ast\GGG$. Let $\pi_1,\pi_2,\mu:\Gm\times\Gm\to\Gm$ denote the projections and the multiplication map. Then
$$\FF\ast\GGG=\R^1\mu_!([a]_\ast(\LL_{\psi(f)}\otimes\KK_{\chi,n})\boxtimes[b]_\ast(\LL_{\psi(g)}\otimes\KK_{\xi,m}))=
$$
$$
=\R^1\mu_![a,b]_\ast((\LL_{\psi(f)}\otimes\KK_{\chi,n} )\boxtimes(\LL_{\psi(g)}\otimes\KK_{\xi,m}))=
$$
$$
=\R^1\sigma_!((\LL_{\psi(f)}\otimes\KK_{\chi,n})\boxtimes(\LL_{\psi(g)}\otimes\KK_{\xi,m}))
$$
where $[a,b]:\Gm\times\Gm\to\Gm\times\Gm$ is the finite \'etale map given by $(x,y)\mapsto(x^a,y^b)$ and $\sigma(x,y)=\mu\circ[a,b](x,y)=x^ay^b$.

Let $\alpha,\beta\in{\mathbb Z}$ be integers such that $\alpha a+\beta b=1$. Let $\phi:\Gm\times\Gm\to\Gm\times\Gm$ be the morphism given by $\phi(w,t)=(w^bt^\alpha,w^{-a}t^\beta)$. Then $\phi$ is an automorphism with inverse $\phi^{-1}(x,y)=(x^\beta y^{-\alpha},x^ay^b)$. In particular, $\sigma=\pi_2\phi^{-1}$, so
$$
\FF\ast\GGG=\R^1\sigma_!((\LL_{\psi(f)}\otimes\KK_{\chi,n})\boxtimes(\LL_{\psi(g)}\otimes\KK_{\xi,m}))=
$$
$$
=\R^1\pi_{2!}\phi^\ast((\LL_{\psi(f)}\otimes\KK_{\chi,n})\boxtimes(\LL_{\psi(g)}\otimes\KK_{\xi,m}))
$$

If we denote by $x,t$ the variables in the first and second factor of $\Gm\times\Gm$, we can write
$$\FF\ast\GGG\cong\R^1\pi_{2!}\phi^\ast(\LL_{\psi(f)}(x)\otimes\KK_{\chi,n}(x)\otimes\LL_{\psi(g)}(t)\otimes\KK_{\xi,m}(t))=
$$
$$
=\R^1\pi_{2!}(\LL_{\psi(f)}(w^bt^\alpha)\otimes\KK_{\chi,n}(w^bt^\alpha)\otimes\LL_{\psi(g)}(w^{-a}t^\beta)\otimes\KK_{\xi,m}(w^{-a}t^\beta))
$$

By proper base change, we get
$$
[bd'+ae']^\ast(\FF\ast\GGG)\cong
$$
$$
\cong\R^1\pi_{2!}(Id,[bd'+ae'])^\ast(\LL_{\psi(f)}(w^bt^\alpha)\otimes\KK_{\chi,n}(w^bt^\alpha)\otimes\LL_{\psi(g)}(w^{-a}t^\beta)\otimes\KK_{\xi,m}(w^{-a}t^\beta))=
$$
$$
=\R^1\pi_{2!}(\LL_{\psi(f)}(w^bt^{\alpha(bd'+ae')})\otimes\KK_{\chi,n}(w^bt^{\alpha(bd'+ae')})\otimes
$$
$$
\otimes\LL_{\psi(g)}(w^{-a}t^{\beta(bd'+ae')})\otimes\KK_{\xi,m}(w^{-a}t^{\beta(bd'+ae')}))=
$$
$$
=\R^1\pi_{2!}(\LL_{\psi(f)}((wt^{\alpha d'-\beta e'})^bt^{e'})\otimes\KK_{\chi,n}((wt^{\alpha d'-\beta e'})^bt^{e'})\otimes
$$
$$
\otimes\LL_{\psi(g)}((wt^{\alpha d'-\beta e'})^{-a}t^{d'})\otimes\KK_{\xi,m}((wt^{\alpha d'-\beta e'})^{-a}t^{d'})
$$

Since we are interested in the monodromy at infinity, let us specialize at that point. Let $S=(\PP^1)^h_\infty=\mathrm{Spec}\; k[t^{-1}]^h_{(t^{-1})}$ be the henselization of $\PP^1$ at $\infty$, $\eta=\mathrm{Spec}\;(\mathrm{Frac}\; k[t^{-1}]^h_{(t^{-1})})\hookrightarrow S$ its generic point, and $\bar\eta$ a geometric point over $\eta$. For $A=S$ or $\eta$, let $\pi_1:\GG_{m,A}:=\Gm\times_k A\to \Gm$ and $\pi_2:\GG_{m,A}\to A$ be the projection maps. If $j:\Gm\to\PP^1$ is the inclusion, given a sheaf ${\mathcal H}$ on $\Gm$ we obtain sheaves on $\eta$ and $S$ by restriction and extension by zero respectively. By abuse of language, we will also denote these sheaves by ${\mathcal H}$. We then have, as $I_\infty$-representations (\ie as sheaves on $\eta$),
$$
[bd'+ae']^\ast(\FF\ast\GGG)\cong\R^1\pi_{2!}(\LL_{\psi(f)}((wt^{\alpha d'-\beta e'})^bt^{e'})\otimes\KK_{\chi,n}((wt^{\alpha d'-ta e'})^bt^{e'})\otimes
$$
$$
\otimes\LL_{\psi(g)}((wt^{\alpha d'-\beta e'})^{-a}t^{d'})\otimes\KK_{\xi,m}((wt^{\alpha d'-\beta e'})^{-a}t^{d'})
$$
Let us now consider the $\eta$-automorphism $\varphi:\GG_{m,\eta}\to\GG_{m,\eta}$ given by $w\mapsto wt^{\alpha d'-\beta e'}$, whose inverse is given by $z\mapsto zt^{\beta e'-\alpha d'}$. Since $\pi_2\varphi=\pi_2$, we have
\begin{equation}\label{eq1}
[bd'+ae']^\ast(\FF\ast\GGG)\cong\R^1\pi_{2!}\varphi_\ast(\LL_{\psi(f)}((wt^{\alpha d'-be'})^b t^{e'})\otimes\KK_{\chi,n}((wt^{\alpha d'-\beta e'})^b t^{e'})\otimes
\end{equation}
$$
\otimes\LL_{\psi(g)}((wt^{\alpha d'-\beta e'})^{-a}t^{d'})\otimes\KK_{\xi,m}((wt^{\alpha d'-\beta e'})^{-a}t^{d'})=
$$
$$
=\R^1\pi_{2!}\left(\LL_{\psi(f)}(z^bt^{e'})\otimes\KK_{\chi,n}(z^bt^{e'})
\otimes\LL_{\psi(g)}(z^{-a}t^{d'})\otimes\KK_{\xi,m}(z^{-a}t^{d'})\right)=
$$
$$
=\R^1\pi_{2!}\left(\LL_{\psi}(f(z^bt^{e'})+g(z^{-a}t^{d'}))\otimes\KK_{\chi,n}(z^bt^{e'})\otimes\KK_{\xi,m}(z^{-a}t^{d'})\right)\cong
$$
$$
\cong\R^1\pi_{2!}\left(\LL_{\psi}(f(z^bt^{e'})+g(z^{-a}t^{d'}))\otimes\LL_{\chi^{e'}\xi^{d'}}(t)\otimes\LL_{\chi^b\xi^{-a}}(z)\otimes\UU_{n}(z^bt^{e'})\otimes\UU_{m}(z^{-a}t^{d'})\right)\cong
$$
$$
\cong\LL_{\chi^{e'}\xi^{d'}}\otimes\R^1\pi_{2!}\left(\LL_{\psi}(f(z^bt^{e'})+g(z^{-a}t^{d'}))\otimes\LL_{\chi^b\xi^{-a}}(z)\otimes\UU_{n}(z^bt^{e'})\otimes\UU_{m}(z^{-a}t^{d'})\right)
$$
by the projection formula.

Let $\HH$ be the sheaf $\LL_{\psi}(f(z^bt^{e'})+g(z^{-a}t^{d'}))\otimes\LL_{\chi^b\xi^{-a}}(z)\otimes\UU_{n}(z^bt^{e'})\otimes\UU_{m}(z^{-a}t^{d'})$ on $\GG_{m,\eta}$, extended by zero to $\PP^1_S:=\PP^1_k\times_k S$. We will study $\R^1\pi_{2!}\HH$ via the vanishing cycles complex $\R\Phi(\HH)$ of the sheaf $\HH$ on $\PP^1_S$ with respect to the projection $\bar\pi_2:\PP^1_S\to S$ (which is the same as the nearby cycles object, since the sheaf vanishes on $\PP^1_\infty\cong\PP^1\times_k\infty$) (cf. \cite{deligne1973groupes} for its definition and properties). We have, by \cite[XIII.2.1.8]{deligne1973groupes},
\begin{equation}\label{eq2}
\R\pi_{2!}\left(\LL_{\psi}(f(z^bt^{e'})+g(z^{-a}t^{d'}))\otimes\LL_{\chi^b\xi^{-a}}(z)\otimes\UU_{n}(z^bt^{e'})\otimes\UU_{m}(z^{-a}t^{d'})\right)\cong
\end{equation}
$$
\cong\R\bar\pi_{2\ast}\HH=\R\Gamma(\PP^1_\infty,\R\Phi(\HH))
$$

The following result is the core of the proof of Theorem \ref{t1}.

\begin{lemma}\label{l1}
 $\R^i\Phi(\HH)=0$ for $i\neq 1$. The sheaf $\R^1\Phi(\HH)$ is supported on $\overline Z\times\{\infty\}$, where $\overline Z$ is the set of $(bd+ae)$-th roots of $aeg_e/bdf_d$. For every such root $\alpha_i$, let $z_i(t^{-1})\in k[t^{-1}]^h_{(t^{-1})}$ be the only root of $\frac{\partial}{\partial z}H(z,t)$ such that $z_i(t^{-1})\equiv\alpha_i$ mod $t^{-1}$, and $h_i(t)=t^{de/c}H(z_i(t^{-1}),t)\in t^{de/c} k[[t^{-1}]]$. Then $\R^1\Phi(\HH)_{(\alpha_i,\infty)}$ has dimension $mn$, and $I_\infty$ acts on it via the representation $\LL_{\psi}(h_i)\otimes\LL_{\rho^{de/c}}\otimes\UU_n\otimes\UU_m$.
\end{lemma}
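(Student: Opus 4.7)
The strategy is to compute $\R\Phi(\HH)$ by localizing at each geometric point of the special fiber $\PP^1_k\times\{\infty\}$ and performing a stationary-phase analysis of the Artin-Schreier factor. Since $\HH|_{\Gm_\eta}$ is lisse and $\HH$ is extended by zero to the special fiber, the complex $\R\Phi(\HH)$ is supported on $\PP^1_\infty$; moreover, since $\bar\pi_2$ has one-dimensional fibers and the degeneration at each critical point turns out to be of Morse type, $\R\Phi(\HH)$ will be concentrated in cohomological degree one.

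To pin down the support of $\R\Phi(\HH)$ inside $\Gm\times\{\infty\}$, I would write $\phi(z,t):=f(z^bt^{e'})+g(z^{-a}t^{d'})=t^{de/c}H(z,t)$ and invoke local acyclicity of $\LL_\psi(\phi)$ along $\bar\pi_2$ wherever $\partial\phi/\partial z$ does not vanish asymptotically. The reduction modulo $t^{-1}$ of the critical-point equation is $\tilde H'(z)=0$, which by the hypothesis on $bd+ae$ has exactly $bd+ae$ simple roots $\alpha_0,\ldots,\alpha_{bd+ae-1}\in k^\times$, lifting by Hensel to the $z_i(t^{-1})$. Away from these, the other tensor factors $\LL_{\chi^b\xi^{-a}}(z)$, $\UU_n(z^bt^{e'})$ and $\UU_m(z^{-a}t^{d'})$ remain lisse at $(z_0,\infty)$, so $\R\Phi(\HH)=0$ there. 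At the remaining points $z_0\in\{0,\infty\}$, $\HH$ is extended by zero and $\phi$ has positive slope in the $z$-direction ($ae$ at $z_0=0$ and $bd$ at $z_0=\infty$), so a direct Swan-conductor/nearby-cycles computation shows $\R\Phi(\HH)=0$ there as well.

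For the main computation at $(\alpha_i,\infty)$, I would substitute $z=z_i(t^{-1})+u$ and Taylor-expand:
$$
\phi(z,t)=h_i(t)+Q_i(t)\,u^2+O(u^3),
$$
where $h_i(t)=t^{de/c}H(z_i(t^{-1}),t)$ and $Q_i(t)$ has leading term $\tfrac12\tilde H''(\alpha_i)\,t^{de/c}$. Since $p$ is odd and $\tilde H''(\alpha_i)\neq 0$, a relative Morse lemma (valid after inverting $t$, where $Q_i(t)$ becomes a unit) produces an étale-local coordinate change $u\leftrightarrow v$ with $\phi-h_i=Q_i(t)\,v^2$ exactly. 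The projection formula and the standard Gauss-sum/local-Fourier-transform computation of $\R\Phi(\LL_\psi(Q_i(t)v^2))$ at $v=0$ (analogous to the quadratic stationary-phase formula in \cite{laumon1987transformation}) then contribute $\LL_\psi(h_i)\otimes\LL_\rho(Q_i(t))\cong\LL_\psi(h_i)\otimes\LL_{\rho^{de/c}}$, the last identification using that $Q_i(t)/t^{de/c}$ reduces to a nonzero constant in $k^\times$ (so the residual Kummer sheaf is unramified, hence trivial on $I_\infty$). The Kummer factor $\LL_{\chi^b\xi^{-a}}(z)$ at $z=z_i(t^{-1})$ is lisse on $S$ and thus trivial on $I_\infty$; the unipotent factors $\UU_n(z^bt^{e'})$ and $\UU_m(z^{-a}t^{d'})$ restricted to $z=z_i(t^{-1})$ pull back $\UU_n$ (resp.~$\UU_m$) along morphisms $S\to\PP^1$ of ramification index $e'$ (resp.~$d'$) at $\infty$, which preserves the Jordan block since the index is prime to $p$. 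Assembling, $\R^1\Phi(\HH)_{(\alpha_i,\infty)}\cong\LL_\psi(h_i)\otimes\LL_{\rho^{de/c}}\otimes\UU_n\otimes\UU_m$ of rank $mn$.

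The main obstacle is making the Morse-type change of variables rigorous in the étale/henselian setting (as opposed to merely formal), together with the associated identification $\LL_\rho(Q_i(t))\cong\LL_{\rho^{de/c}}$: one needs an actual étale-local isomorphism on a neighborhood of $(\alpha_i,\infty)$ in $\Gm_S$ and a careful bookkeeping of unramified twists. Both steps rely essentially on $p$ being odd, so that $2Q_i$ is a unit at the critical point and the quadratic Kummer sheaf $\LL_\rho$ is well-defined.
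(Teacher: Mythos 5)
Your analysis at the interior critical points $(\alpha_i,\infty)$ is essentially the paper's: a henselian Morse-type change of coordinates (the paper quotes Fu's Lemma 1.3 for exactly this step, so your ``main obstacle'' is a known lemma), the quadratic stationary-phase computation, which the paper performs by pushing forward along the squaring map and using Laumon's ${\mathcal F}^{(0,\infty)}_\psi(\LL_\rho)\cong\LL_\rho$ after translating by $\tilde h_i$ and pulling back by $[de/c]$, the triviality of the Kummer factor $\LL_{\chi^b\xi^{-a}}(z)$ on the henselian neighborhood, the identification of $\UU_n(z^bt^{e'})\otimes\UU_m(z^{-a}t^{d'})$ with $\UU_n(t)\otimes\UU_m(t)$ there, and the unramified-square identification giving $\LL_{\rho^{de/c}}$ (which indeed only needs $p$ odd and $k$ algebraically closed). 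That local part is sound and matches the paper.

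The genuine gaps are in the steps you dispose of in one line each: (i) the vanishing of $\R\Phi(\HH)$ away from the points $(\alpha_i,\infty)$, in particular at $(0,\infty)$ and $(\infty,\infty)$, and (ii) the concentration of $\R\Phi(\HH)$ in degree $1$. Your appeal to ``local acyclicity of $\LL_\psi(\phi)$ wherever $\partial\phi/\partial z$ does not vanish asymptotically'' is not a quotable theorem; the paper makes this rigorous by pushing forward along the finite map $\theta$ given by $H(z,t)$, noting that $\theta_\ast(\LL_{\chi^b\xi^{-a}}(z))$ is smooth off $W\cup\{\infty\}$, and invoking the universal strong local acyclicity of $\LL_\psi(t^{de/c}u)$ (base-changed from $\LL_\psi(tu)$, Laumon 1.3.1.2--1.3.1.3), which also kills the trivial summand of $\theta_{i\ast}\Ql$. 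More seriously, the claim that a ``direct Swan-conductor/nearby-cycles computation'' kills the stalks at $z=0$ and $z=\infty$ is unsupported, and it is exactly the kind of boundary contribution that is nonzero in the closely analogous situation of Laumon's stationary phase for the Fourier transform, where the point at infinity does contribute; ``positive slope in the $z$-direction'' does not by itself force a nearby-cycle stalk to vanish. The paper never proves these vanishings locally: it only constructs injections $\LL_\psi(h_i)\otimes\LL_{\rho^{de/c}}\otimes\UU_n\otimes\UU_m\hookrightarrow\R^1\Phi(\HH)_{(\alpha_i,\infty)}$, establishes punctuality of $\R\Phi(\HH)$ via the $\theta_\ast$ argument, and then uses the global inputs from Katz --- $\R^i\bar\pi_{2\ast}\HH=0$ for $i\neq 1$ (Theorem 5.1.1) and the rank formula $mn(bd+ae)$ (Chapter 6) --- to conclude by a dimension count that the injections are isomorphisms, that the support is exactly $\{(\alpha_i,\infty)\}$, and that $\R^i\Phi(\HH)=0$ for $i\neq 1$. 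As written, your purely local route leaves (i) and (ii) unproved; either supply genuine arguments there or import the global facts as the paper does.
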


Combined with equations \ref{eq1} and \ref{eq2}, this lemma proves Proposition \ref{p1}.

\begin{proof}

We have
$$
\R\Phi(\HH)=\R\Phi\left(\LL_{\psi}(t^{de/c}H(z,t))\otimes\LL_{\chi^b\xi^{-a}}(z)\otimes\UU_{n}(z^bt^{e'})\otimes\UU_{m}(z^{-a}t^{d'})\right)
$$
where the involved sheaves are extended by zero to $\PP^1\times S$ as needed.

Let $\theta:\GG_{m,S} \to\AAA^1_S$ be the finite $S$-morphism given by $H(z,t)$, which extends uniquely to a finite $S$-morphism $\theta:\PP^1_S\to\PP^1_S$. Since the vanishing cycles functor commutes with push-forward by proper maps \cite[XIII.2.1.7]{deligne1973groupes} we have
\begin{equation}\label{theta}
\theta_\ast\R\Phi(\HH)=\theta_\ast\R\Phi\left(\LL_{\psi}(t^{de/c}H(z,t))\otimes\LL_{\chi^b\xi^{-a}}(z)\otimes\UU_{n}(z^bt^{e'})\otimes\UU_{m}(z^{-a}t^{d'})\right)\cong
\end{equation}
$$
\cong\R\Phi\left(\theta_\ast(\LL_{\psi}(t^{de/c}H(z,t))\otimes\LL_{\chi^b\xi^{-a}}(z)\otimes\UU_{n}(z^bt^{e'})\otimes\UU_{m}(z^{-a}t^{d'}))\right)\cong
$$
$$
\cong
\R\Phi\left(\LL_{\psi}(t^{de/c}u)\otimes\theta_\ast(\LL_{\chi^b\xi^{-a}}(z)\otimes\UU_{n}(z^bt^{e'})\otimes\UU_{m}(z^{-a}t^{d'}))\right)
$$
where the last isomorphism comes from the projection formula and we denote by $u$ the coordinate in the codomain of $\theta$.

Let ${\mathcal J}:=\theta_\ast(\LL_{\chi^b\xi^{-a}}(z)\otimes\UU_{n}(z^bt^{e'})\otimes\UU_{m}(z^{-a}t^{d'}))$, and let $W=\theta(Z)\subset\AAA^1_S$, where $Z=\{z_i(t^{-1})|i=0,\ldots,bd+ae-1\}$. We claim that the object $\R\Phi(\LL_{\psi}(t^{de/c}u)\otimes{\mathcal J})$ is supported on $\overline W\cup\{\infty\}$, where $\overline W=\{w\mbox{ mod }t^{-1}|w\in W\}$ is the specialization of $W$. Since $\mathcal J$ is a succesive extension of copies of $\theta_\ast(\LL_{\chi^b\xi^{-a}}(z))$, it suffices to show that  $\R\Phi(\LL_{\psi}(t^{de/c}u)\otimes\theta_\ast(\LL_{\chi^b\xi^{-a}}(z)))$ is supported on $\overline W\cup\{\infty\}$. Note that $\theta:\GG_{m,S}\to\AAA^1_S$ is finite \'etale of degree $bd+ae$ over the complement of $W$. In particular, $\theta_\ast(\LL_{\chi^b\xi^{-a}}(z))$ is a smooth sheaf on $\PP^1_S\backslash(W\cup\{\infty\})$. The fact that $\R\Phi(\LL_{\psi}(t^{de/c}u)\otimes\theta_\ast(\LL_{\chi^b\xi^{-a}}(z)))$ is supported on $\overline W\cup\{\infty\}$ is then a consequence of the fact 
that the sheaf $\LL_{\psi}(t^{de/c}u)$ is universally strongly locally acyclic with respect to $\bar\pi_2$ \cite[1.3.1.2,1.3.1.3]{laumon1987transformation}, being obtained by base change from $\LL_{\psi}(tu)$.

Therefore
\begin{equation}\label{decomposition}
\R\bar\pi_{2*}\left(\LL_\psi(t^{de/c}u)\otimes{\mathcal J}\right)\cong
\bigoplus_{w\in\overline W\cup\infty}\R\Phi\left(\LL_\psi(t^{de/c}u)\otimes{\mathcal J}\right)_{(w,\infty)}
\end{equation}

From (\ref{theta}) we deduce that $\R\Phi(\HH)$ is punctual, and in fact supported on a subset of $\theta^{-1}(\theta(\overline Z)\cup\{\infty\})$. Since we know a priori that $\HHH^i(\PP^1_\infty,\R\Phi(\HH))=\R^i\bar\pi_{2\ast}{\mathcal H}=0$ for $i\neq 1$ \cite[Theorem 5.1.1]{katz1988gauss}, this implies in particular that $\R^i\Phi(\HH)=0$ for $i\neq 1$.

Let $\alpha_i\in\overline Z$, $z_i(t^{-1})\in Z$ such that $z_i\equiv\alpha_i$ mod $t^{-1}$, and $w_i=\tilde H(z_i)\in\overline W$, and consider the restriction of the sheaf $\mathcal J$ to the henselization $(\AAA^1_S)^h_{(w_i,\infty)}$. Denote by $\theta_i:(\GG_{m,S})^h_{(\alpha_i,\infty)}\to(\AAA^1_S)^h_{(w_i,\infty)}$ the restriction of $\theta$. Since $\theta$ is a finite map, ${\mathcal J}_i:=\theta_{i\ast}(\LL_{\chi^b\xi^{-a}}(z)\otimes\UU_{n}(z^bt^{e'})\otimes\UU_{m}(z^{-a}t^{d'}))$ is a direct summand of $\mathcal J$. Moreover, given that $\LL_{\chi^b\xi^{-a}}(z)$ is a smooth sheaf on $\GG_{m,S}$, it is trivial on $(\GG_{m,S})^h_{(\alpha_i,\infty)}$, so ${\mathcal J}_i\cong\theta_{i\ast}(\UU_{n}(z^bt^{e'})\otimes\UU_{m}(z^{-a}t^{d'}))$.

Denote by $K$ the field $k[t^{-1}]^h_{(t^{-1})}$, fraction field of the henselization of $k[t^{-1}]$ at the ideal $(t^{-1})$, so that $\eta=\Spec\;K$. The closed immersion $\eta\hookrightarrow (\GG_{m,S})^h_{(\alpha_i,\infty)}\backslash\varpi^{-1}(\infty)$ (where $\varpi:(\GG_{m,S})^h_{(\alpha_i,\infty)}\to S$ is the projection) associated to the residue map $K[z]_{(z-\alpha_i)}^h\to K$ induces an isomorphism $\pi_1(\eta)\cong\pi_1((\GG_{m,S})^h_{(\alpha_i,\infty)}\backslash\varpi^{-1}(\infty))$ \cite[Proposition I.4.4]{milne1980ecv}. In particular, two smooth sheaves on $(\GG_{m,S})^h_{(\alpha_i,\infty)}\backslash\varpi^{-1}(\infty)$ are isomorphic if and only if their restrictions to $\eta$ are. We apply this fact to $\UU_{n}(z^bt^{e'})$: as a representation of $\pi_1(\eta)$, it is $\UU_{n}(\alpha_i^bt^{e'})$, which is indecomposable and unipotent (being the restriction of an indecomposable unipotent representation to a subgroup of finite index $e'$). So it must be isomorphic to $\UU_n(t)$, which is the 
unique such representation up to isomorphism. Therefore $\UU_{n}(z^bt^{e'})$ is isomorphic to $\UU_n(t)$ on $(\GG_{m,S})^h_{(\alpha_i,\infty)}\backslash\varpi^{-1}(\infty)$, and so is $\UU_m(z^{-a}t^{d'})$ to $\UU_m(t)$. We deduce that $\theta_{i\ast}(\UU_{n}(z^bt^{e'})\otimes\UU_{m}(z^{-a}t^{d'}))\cong\theta_{i\ast}(\UU_{n}(t)\otimes\UU_m(t))\cong\theta_{i\ast}(\theta_i^\ast\UU_n(t)\otimes\theta_i^ \ast\UU_m(t))\cong\theta_{i\ast}(\theta_i^\ast(\UU_n(t)\otimes\UU_m(t)))\cong\UU_n(t)\otimes\UU_m(t)\otimes\theta_{i\ast}\Ql$.

Let $\beta_i(z,t)=H(z_i(t^{-1})\cdot z,t)-\tilde h_i(t)$, where $\tilde h_i(t)=H(z_i(t^{-1}),t)$. Then $\beta_i(z,t)$ satisfies the hypotheses of \cite[Lemma 1.3]{fu2010calculation} (centered at $(1,\infty)$ instead of $(1,0)$), namely: $\beta_i(1,t)=0$, $\frac{\partial\beta}{\partial z}(1,t)=0$ and $\frac{\partial^2\beta_i}{\partial z^2}(1,\infty)\neq 0$. Therefore, by \emph{loc.cit.} there is an $S$-isomorphism $\tilde\phi:(\GG_{m,S})^h_{(1,\infty)}\to(\AAA^1_S)^h_{(0,\infty)}\cong(\AAA^2_k)^h_{(0,\infty)}$ such that $\beta_i=\omega\circ\tilde\phi:(\GG_{m,S})^h_{(1,\infty)}\to(\AAA^1_S)^h_{(0,\infty)}$, where the $S$-morphism $\omega:(\AAA^1_S)^h_{(0,\infty)}\to(\AAA^1_S)^h_{(0,\infty)}$ is given by $z\mapsto z^2$.

By composing $\tilde\phi$ with the $S$-isomorphism $(\GG_{m,S})^h_{(\alpha_i,\infty)}\to(\GG_{m,S})^h_{(1,\infty)}$ defined by $z\mapsto z_i(t^{-1})^{-1}z$ we obtain an $S$-isomorphism $\phi:(\GG_{m,S})^h_{(\alpha_i,\infty)}\to(\AAA^1_S)^h_{(0,\infty)}$ such that $\omega\circ\phi(z,t)=\beta_i(z_i(t^{-1})^{-1}z,t)=\tilde\theta(z,t):=H(z,t)-\tilde h_i(t)$. Let us also denote by $\delta_i:(\AAA^1_S)^h_{(0,\infty)}\to(\AAA^1_S)^h_{(w_i,\infty)}$ the translation defined by $z\mapsto z+\tilde h_i(t)$, so that $\theta_i=\delta_i\tilde\theta_i$. Then 
$$\theta_{i\ast}\Ql=\delta_{i\ast}\tilde\theta_{i\ast}\Ql=\delta_{i\ast}\omega_\ast\phi_\ast\Ql=
$$
$$
=\delta_{i\ast}\omega_\ast\Ql\cong\delta_{i\ast}(\Ql\oplus\LL_\rho(u))=\Ql\oplus\LL_\rho(u-\tilde h_i(t))
$$
where $\rho$ is the unique character of order $2$ of $I_\infty$, and in particular we have injections
$$
\LL_\rho(u-\tilde h_i(t))\otimes\UU_n(t)\otimes\UU_m(t)\hookrightarrow\theta_{i\ast}(\LL_{\chi^b\xi^{-a}}(z)\otimes\UU_{n}(z^bt^{e'})\otimes\UU_{m}(z^{-a}t^{d'}))
$$
and
$$
\R^1\Phi(\LL_\psi(t^{de/c}u)\otimes\LL_\rho(u-\tilde h_i(t))\otimes\UU_n(t)\otimes\UU_m(t))_{(w_i,\infty)}\hookrightarrow
$$
$$
\hookrightarrow\R^1\Phi(\LL_\psi(t^{de/c}u)\otimes\theta_{i\ast}(\LL_{\chi^b\xi^{-a}}(z)\otimes\UU_{n}(z^bt^{e'})\otimes\UU_{m}(z^{-a}t^{d'})))_{(w_i,\infty)}
$$

By the projection formula, we have
$$
\R^1\Phi(\LL_\psi(t^{de/c}u)\otimes\LL_\rho(u-\tilde h_i(t))\otimes\UU_n(t)\otimes\UU_m(t))_{(w_i,\infty)}\cong
$$
$$
\cong
\UU_n(t)\otimes\UU_m(t)\otimes\R^1\Phi(\LL_\psi(t^{de/c}u)\otimes\LL_\rho(u-\tilde h_i(t)))_{(w_i,\infty)}
$$

And using the $S$-isomorphism $\delta_i:(\AAA^1_S)^h_{(0,\infty)}\to(\AAA^1_S)^h_{(w_i,\infty)}$, we get
$$
\R^1\Phi(\LL_\psi(t^{de/c}u)\otimes\LL_\rho(u-\tilde h_i(t))_{(w_i,\infty)}\cong
$$
$$
\cong\R^1\Phi(\LL_\psi(t^{de/c}(u+\tilde h_i(t)))\otimes\LL_\rho(u))_{(0,\infty)}\cong
$$
$$
\cong\LL_\psi(t^{de/c}\tilde h_i(t))\otimes\R^1\Phi(\LL_\psi(t^{de/c}u)\otimes\LL_\rho(u))_{(0,\infty)}\cong
$$
$$
\cong\LL_\psi(t^{de/c}\tilde h_i(t))\otimes[de/c]^\ast\R^1\Phi(\LL_\psi(tu)\otimes\LL_\rho(u))_{(0,\infty)}
$$
by the vanishing cycles base change theorem \cite[Proposition 3.7]{deligne569finitude}. But $\R^1\Phi(\LL_\psi(tu)\otimes\LL_\rho(u))_{(0,\infty)}$ is Laumon's local Fourier transform functor ${\mathcal F}_\psi^{(0,\infty)}$ \cite[2.4.2.3]{laumon1987transformation} applied to $\LL_\rho$, which is isomorphic to $\LL_\rho$ itself \cite[2.5.3.1]{laumon1987transformation}. So
$$
\LL_\psi(t^{de/c}\tilde h_i)\otimes\LL_{\rho^{de/c}}\otimes\UU_n\otimes\UU_m\hookrightarrow
\left(\R^1\Phi(\LL_\psi(t^{de/c}u)\otimes{\mathcal J}_i)\right)_{(w_i,\infty)}
$$

Using again that the vanishing cycle functor commutes with push-forwards by finite maps (this time applied to $\theta_i$), together with the projection formula, we get
$$
\left(\R^1\Phi(\LL_\psi(t^{de/c}u)\otimes{\mathcal J}_i)\right)_{(w_i,\infty)}\cong
$$
$$
\cong\theta_{i\ast}\left[\R^1\Phi\left(\LL_{\psi}(t^{de/c}H(z,t))\otimes\LL_{\chi^b\xi^{-a}}(z)\otimes\UU_{n}(z^bt^{e'})\otimes\UU_{m}(z^{-a}t^{d'})\right)_{(\alpha_i,\infty)}\right]\cong
$$
$$
\cong\R^1\Phi\left(\LL_{\psi}(t^{de/c}H(z,t))\otimes\LL_{\chi^b\xi^{-a}}(z)\otimes\UU_{n}(z^bt^{e'})\otimes\UU_{m}(z^{-a}t^{d'})\right)_{(\alpha_i,\infty)}
$$
so
$$
\LL_\psi(t^{de/c}\tilde h_i)\otimes\LL_{\rho^{de/c}}\otimes\UU_n\otimes\UU_m\hookrightarrow
\R^1\Phi({\mathcal H})_{(\alpha_i,\infty)}.
$$

Taking the direct sum over all $i=0,\ldots,bd+ae-1$ we have

$$
\bigoplus_{i=0}^{bd+ae-1}\LL_\psi(h_i)\otimes\LL_{\rho^{de/c}}\otimes\UU_n\otimes\UU_m\hookrightarrow \bigoplus_{i=0}^{bd+ae-1}\R^1\Phi(\HH)_{(\alpha_i,\infty)}\hookrightarrow
$$
$$
\hookrightarrow\HHH^1(\PP^1_\infty,\R\Phi(\HH))
$$
and, tensoring with $\LL_{\chi^{e'}\xi^{d'}}$,
$$
\bigoplus_{i=0}^{bd+ae-1}\LL_\psi(h_i)\otimes\LL_{\chi^{e'}\xi^{d'}}\otimes\LL_{\rho^{de/c}}\otimes\UU_n\otimes\UU_m\hookrightarrow
[bd'+ae']^\ast LC_{(\infty,\infty)}^{\infty}(\FF,\GGG)
$$
 
But from \cite[6.1]{katz1988gauss} we know that $LC_{(\infty,\infty)}^{\infty}(\FF,\GGG)$ (and hence also $[bd'+ae']^\ast LC_{(\infty,\infty)}^{\infty}(\FF,\GGG)$) has dimension $mn(bd+ae)$, so we conclude that these inclusions must be isomorphisms. In particular, this shows that $\R^1\Phi(\HH)$ is supported exactly on the set $\{(\alpha_i,\infty)|i=0,\ldots,bd+ae-1\}$ and that $\R^1\Phi(\HH)_{(\alpha_i,\infty)}\cong\LL_\psi(h_i)\otimes\LL_{\rho^{de/c}}\otimes\UU_n\otimes\UU_m$ as a representation of $I_\infty$ for every $i=0,\ldots,bd+ae-1$.
\end{proof}

Let ${\mu}\subset k$ be the group of $(bd'+ae')$-th roots of unity. It acts on $\eta$ by multiplication, and the sheaf $[bd'+ae']^\ast(\FF\ast\GGG)$ is equivariant for this action. This action can be lifted to an action on $\GG_{m,\eta}$ by defining $\zeta\cdot z=\zeta^{\alpha d'-\beta e'}z$ for $\zeta\in\mu$. Note that the sheaf
$$
\LL_{\psi}(f(z^bt^{e'})+g(z^{-a}t^{d'}))\otimes\KK_{\chi,n}(z^bt^{e'})\otimes\KK_{\xi,m}(z^{-a}t^{d'})
$$
is invariant under this action, since $\zeta\cdot z^bt^{e'}=\zeta^{\alpha bd'-\beta be'+e'} z^bt^{e'}=\zeta^{\alpha bd'+\alpha ae'} z^bt^{e'}=z^bt^{e'}$ (and similarly for $z^{-a}t^{d'}$).

The actions of $\mu$ on $\GG_{m,\eta}$ and $\eta$ are compatible, so the action of $\mu$ on $[bd'+ae']^\ast(\FF\ast\GGG)\cong \R^1\pi_{2!}\left(\LL_{\psi}(f(z^bt^{e'})+g(z^{-a}t^{d'}))\otimes\KK_{\chi,n}(z^bt^{e'})\otimes\KK_{\xi,m}(z^{-a}t^{d'})\right)$ is induced by its action on the pair $$(\GG_{m,\eta},
\LL_{\psi}(f(z^bt^{e'})+g(z^{-a}t^{d'}))\otimes\KK_{\chi,n}(z^bt^{e'})\otimes\KK_{\xi,m}(z^{-a}t^{d'}))
$$
From Lemma \ref{l1} we know that
$$
\R^1\pi_{2!}\left(\LL_{\psi}(f(z^bt^{e'})+g(z^{-a}t^{d'}))\otimes\KK_{\chi,n}(z^bt^{e'})\otimes\KK_{\xi,m}(z^{-a}t^{d'})\right)\cong
$$
$$
\cong\LL_{\chi^{e'}\xi^{d'}}\otimes\HHH^1(\PP^1_{\infty},\R\Phi(\HH))\cong\bigoplus_{i=0}^{bd+ae-1}\LL_{\chi^{e'}\xi^{d'}}\otimes\R^1\Phi(\HH)_{(\alpha_i,\infty)}
$$
and the action of $\zeta\in\mu$ takes $(\alpha_i,\infty)$ to $(\zeta^{\alpha d'-\beta e'}\alpha_i,\infty)=(\alpha_{i+c(\alpha d'-\beta e')},\infty)=(\alpha_{i+\alpha d-\beta e},\infty)$ (where we define $\alpha_j=\alpha_i$ if $j\equiv i$ mod $bd+ae$). So the action of $\zeta$ permutes the summands of $\LL_{\chi^{e'}\xi^{d'}}\otimes\HHH^1(\PP^1_{\infty},\R\Phi(\HH))$ by taking $\LL_{\chi^{e'}\xi^{d'}}\otimes\R^1\Phi(\HH)_{(\alpha_i,\infty)}$ to $\LL_{\chi^{e'}\xi^{d'}}\otimes\R^1\Phi(\HH)_{(\alpha_{i+\alpha d-\beta e},\infty)}$.

In particular, $\zeta^l$ fixes $\LL_{\chi^{e'}\xi^{d'}}\otimes\R^1\Phi(\HH)_{(\alpha_i,\infty)}$ if and only if $bd+ae$ divides $l(\alpha d-\beta e)$, that is, if and only if $bd'+ae'$ divides $l(\alpha d'-\beta e')$. But $bd'+ae'$ and $\alpha d'-\beta e'$ are relatively prime, since $\beta(bd'+ae')+a(\alpha d'-\beta e')=d'$ and $\alpha(bd'+ae')-b(\alpha d'-\beta e')=e'$ and $d',e'$ are relatively prime. Therefore $bd'+ae'$ must divide $l$, so $\mu$ acts freely on the set of summands of $\LL_{\chi^{e'}\xi^{d'}}\otimes\HHH^1(\PP^1_{\infty},\R\Phi(\HH))$. 

We deduce that, as a representation of $I_\infty$, $LC_{(\infty,\infty)}^\infty(\FF,\GGG)$ is induced from the direct sum of representatives of the orbits of the action of $\mu$ on the set of summands, that is,
$$
LC_{(\infty,\infty)}^{\infty}(\FF,\GGG) \cong[bd'+ae']_\ast\left(\bigoplus_{i=0}^{c-1}\LL_\psi(h_i)\otimes\LL_{\rho^{de/c}}\otimes\LL_{\chi^{e'}\xi^{d'}}\otimes\UU_n\otimes\UU_m\right)
$$
This proves Theorem \ref{t1}, since $de/c$ is even if and only if one of $d$, $e$ is even, that is, if and only if $de$ is even.

\section{The local convolution $LC_{(0,\infty)}^{\infty}$}

In this section we will deduce a similar explicit formula for the local convolution operator $LC_{(0,\infty)}^\infty$. The computations are very similar to those of the previous section, so we will not describe them in detail and only highlight the differences.

By \cite[Proposition 13]{rojas2013local}, $LC_{(0,\infty)}^\infty(\iota^\ast\FF,\GGG)=0$ if $d/a$ (the slope of $\FF$) is less than or equal to $e/b$ (the slope of $\GGG$), so we will assume that $bd>ae$. Again we will start by studying the restriction to the subgroup of index $bd'-ae'$ of $I_\infty$:

\begin{proposition} Under the previos notation, suppose that $bd-ae$ is prime to $p$. Then
 $$
[bd'-ae']^\ast LC_{(0,\infty)}^{\infty}(\iota^\ast\FF,\GGG)\cong\bigoplus_{i=0}^{bd-ae-1}(\LL_{\psi(h_i)}\otimes\LL_{\rho^{de/c}}\otimes\LL_{\chi^{e'}\xi^{d'}}\otimes \UU_n\otimes \UU_m)
$$
\end{proposition}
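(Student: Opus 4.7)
The plan is to mirror the proof of Proposition \ref{p1} step by step, tracking only the sign modifications due to the inversion on the first factor. Since $\iota\circ[a]=[a]\circ\iota$, we have $\iota^\ast\FF=[a]_\ast(\LL_\psi(f(1/x))\otimes\LL_{\chi^{-1}}\otimes\UU_n(x))$, so
$$
\iota^\ast\FF\ast\GGG\cong\R^1\tau_!\bigl((\LL_\psi(f(1/x))\otimes\KK_{\chi^{-1},n})\boxtimes(\LL_\psi(g)\otimes\KK_{\xi,m})\bigr)
$$
is still a pushforward under the same map $\tau(x,y)=x^ay^b$ as in Section 2. I would then apply the same automorphism $\phi(w,t)=(w^bt^\alpha,w^{-a}t^\beta)$ (with $\alpha a+\beta b=1$), pull back by $[bd'-ae']$, and use the $\eta$-automorphism $w\mapsto zt^{-\gamma}$ with $\gamma=\alpha d'+\beta e'$. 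An elementary exponent computation, using the relations $-\alpha(bd'-ae')+b\gamma=e'$ and $\beta(bd'-ae')+a\gamma=d'$, yields
$$
[bd'-ae']^\ast(\iota^\ast\FF\ast\GGG)\cong\LL_{\chi^{e'}\xi^{d'}}\otimes\R^1\pi_{2!}\bigl(\LL_\psi(f(z^{-b}t^{e'})+g(z^{-a}t^{d'}))\otimes\LL_{\chi^{-b}\xi^{-a}}(z)\otimes\UU_n(z^{-b}t^{e'})\otimes\UU_m(z^{-a}t^{d'})\bigr),
$$
which is exactly the setup that produces the polynomial $H(z,t)=t^{-de/c}(f(t^{e'}z^{-b})+g(t^{d'}z^{-a}))$ of Theorem \ref{t2}.

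From this point the analogue of Lemma \ref{l1} should carry through almost verbatim. The reduction $\tilde H(z)=z^{-bd}(f_d+g_ez^{bd-ae})$ has derivative with exactly $bd-ae$ simple roots $\alpha_i\in k^\times$ under the hypothesis $p\nmid bd-ae$, each Hensel-lifting to a root $z_i(t^{-1})$ of $\partial H/\partial z$ with critical value $h_i(t)=t^{de/c}H(z_i(t^{-1}),t)$. The finite map $\theta(z,t)=H(z,t)$ is étale of degree $bd-ae$ away from its critical values, so $\theta_\ast$ commutes with $\R\Phi$ and reduces the computation to studying $\LL_\psi(t^{de/c}u)\otimes\theta_\ast(\LL_{\chi^{-b}\xi^{-a}}(z)\otimes\UU_n\otimes\UU_m)$. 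Fu's Morse lemma \cite[Lemma 1.3]{fu2010calculation} applies at each $(\alpha_i,\infty)$ exactly as in Section 2, reducing $\theta$ locally to $u\mapsto u^2$ and producing $\theta_{i\ast}\Ql\cong\Ql\oplus\LL_\rho(u-\tilde h_i(t))$. Combined with the identity ${\mathcal F}_\psi^{(0,\infty)}\LL_\rho\cong\LL_\rho$ \cite[2.5.3.1]{laumon1987transformation} and $[de/c]$-base change on the vanishing cycles, this gives the expected summand $\LL_\psi(h_i)\otimes\LL_{\rho^{de/c}}\otimes\UU_n\otimes\UU_m$ at each critical point.

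The step that needs the most care is the compactification argument inside Lemma \ref{l1}. In Section 2, $\tilde H$ has a pole only at $z=\infty$, whereas here it has a pole at $z=0$ instead. The support claim for $\R\Phi(\LL_\psi(t^{de/c}u)\otimes\theta_\ast\LL_{\chi^{-b}\xi^{-a}})$ via universal strong local acyclicity \cite[1.3.1.2,1.3.1.3]{laumon1987transformation} must be repeated with the roles of $z=0$ and $z=\infty$ essentially swapped, which should be straightforward but needs verification. Finally, promoting the Morse-lemma-derived injections to isomorphisms requires the \emph{a priori} dimension $mn(bd-ae)$ of $LC_{(0,\infty)}^\infty(\iota^\ast\FF,\GGG)$, the analog of \cite[6.1]{katz1988gauss} for the $(0,\infty)$-convolution, which one extracts from the general theory of \cite{rojas2013local}. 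Once the inclusions become equalities, tensoring back with $\LL_{\chi^{e'}\xi^{d'}}$ produces the stated decomposition.
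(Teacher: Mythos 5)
Your proposal is correct and follows essentially the same route as the paper, which likewise reduces the statement to the sign-modified analogue of Lemma \ref{l1} (same change of variables, same vanishing-cycle/Morse-lemma analysis at the critical points of $H$, same promotion of inclusions to isomorphisms via the a priori dimension $mn(bd-ae)$ coming from \cite[Proposition 13]{rojas2013local}). One small correction at the step you flag: in this case $\theta$ is only quasi-finite on $\GG_{m,S}$ and becomes finite of degree $bd$ (not $bd-ae$) after extending to $\PP^1_S\to\PP^1_S$, with the image of $z=\infty$ now a finite point giving one extra potential support point for the vanishing cycles; this changes nothing in the end, since the punctuality of $\R\Phi$ plus the dimension count kills any such extra contribution exactly as in Section 2.
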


As a sheaf on $\Gm$, we have (using the same notation as in the previous section)
$$(\iota^\ast\FF)\ast\GGG=\R^1\mu_!([a]_\ast\iota^\ast(\LL_{\psi(f)}\otimes\KK_{\chi,n})\boxtimes[b]_\ast(\LL_{\psi(g)}\otimes\KK_{\xi,m}))=
$$
$$
=\R^1\sigma_!(\iota^\ast(\LL_{\psi(f)}\otimes\KK_{\chi,n})\boxtimes(\LL_{\psi(g)}\otimes\KK_{\xi,m}))
$$

Let $\alpha,\beta\in{\mathbb Z}$ be integers such that $\alpha a+\beta b=1$. Using the automorphism $\phi:\Gm\times\Gm\to\Gm\times\Gm$ given by $\phi(w,t)=(w^bt^\alpha,w^{-a}t^\beta)$ we get

$$(\iota^\ast\FF)\ast\GGG\cong\R^1\pi_{2!}\phi^\ast(\LL_{\psi(f)}(x^{-1})\otimes\KK_{\chi,n}(x^{-1})\otimes\LL_{\psi(g)}(t)\otimes\KK_{\xi,m}(t))=
$$
$$
=\R^1\pi_{2!}(\LL_{\psi(f)}(w^{-b}t^{-\alpha})\otimes\KK_{\chi,n}(w^{-b}t^{-\alpha})\otimes\LL_{\psi(g)}(w^{-a}t^\beta)\otimes\KK_{\xi,m}(w^{-a}t^\beta))
$$

By proper base change, we have
$$
[bd'-ae']^\ast((\iota^\ast\FF)\ast\GGG)\cong
$$
$$
\cong\R^1\pi_{2!}(\LL_{\psi(f)}(w^{-b}t^{-\alpha(bd'-ae')})\otimes\KK_{\chi,n}(w^{-b}t^{-\alpha(bd'-ae')})\otimes
$$
$$
\otimes\LL_{\psi(g)}(w^{-a}t^{\beta(bd'-ae')})\otimes\KK_{\xi,m}(w^{-a}t^{\beta(bd'-ae')}))=
$$
$$
=\R^1\pi_{2!}(\LL_{\psi(f)}((wt^{\alpha d'+\beta e'})^{-b}t^{e'})\otimes\KK_{\chi,n}((wt^{\alpha d'+\beta e'})^{-b}t^{e'})\otimes
$$
$$
\otimes\LL_{\psi(g)}((wt^{\alpha d'+\beta e'})^{-a}t^{d'})\otimes\KK_{\xi,m}((wt^{\alpha d'+\beta e'})^{-a}t^{d'})
$$

We now specialize at $S$, the henselization of $\PP^1$ at $\infty$, and use the $\eta$-automorphism $\varphi:\GG_{m,\eta}\to\GG_{m,\eta}$ given by $w\mapsto wt^{\alpha d'+\beta e'}$ to obtain (as sheaves on $S$):
$$
[bd'-ae']^\ast((\iota^\ast\FF)\ast\GGG)\cong
$$
$$
\cong\R^1\pi_{2!}\left(\LL_{\psi(f)}(z^{-b}t^{e'})\otimes\KK_{\chi,n}(z^{-b}t^{e'})
\otimes\LL_{\psi(g)}(z^{-a}t^{d'})\otimes\KK_{\xi,m}(z^{-a}t^{d'})\right)=
$$
$$
=\R^1\pi_{2!}\left(\LL_{\psi}(f(z^{-b}t^{e'})+g(z^{-a}t^{d'}))\otimes\KK_{\chi,n}(z^{-b}t^{e'})\otimes\KK_{\xi,m}(z^{-a}t^{d'})\right)\cong
$$
$$
\cong\LL_{\chi^{e'}\xi^{d'}}\otimes\R\bar\pi_{2\ast}\HH=\LL_{\chi^{e'}\xi^{d'}}\otimes\R\Gamma(\PP^1_\infty,\R\Phi(\HH))
$$
where $\HH$ is the sheaf $\LL_{\psi}(f(z^{-b}t^{e'})+g(z^{-a}t^{d'}))\otimes\LL_{\chi^b\xi^{-a}}(z)\otimes\UU_n(z^bt^{-e'})\otimes\UU_m(z^{-a}t^{d'})$ on $\GG_{m,\eta}$, extended by zero to $\PP^1_S$. 

The proof of the proposition is completed by the following lemma, whose proof is identical to that of Lemma \ref{l1} (using that $LC_{(0,\infty)}^\infty(\iota^\ast\FF,\GGG)$ has dimension $bd-ae$ by \cite[Proposition 13]{rojas2013local}).

\begin{lemma}
 $\R^i\Phi(\HH)=0$ for $i\neq 1$. The sheaf $\R^1\Phi(\HH)$ is supported on the set $\overline Z$ of $(bd-ae)$-th roots of $-bdf_d/aeg_e$. For every such root $\alpha_i$, let $z_i(t^{-1})\in k[t^{-1}]^h_{(t^{-1})}$ be the only root of $\frac{\partial}{\partial z}H(z,t)$ such that $z_i(t^{-1})\equiv\alpha_i$ mod $t^{-1}$, and $h_i(t)=t^{de/c}H(z_i(t^{-1}),t)\in t^{de/c} k[[t^{-1}]]$. Then $\R^1\Phi(\HH)_{(\alpha_i,\infty)}$ has dimension $mn$, and $I_\infty$ acts on it via the representation $\LL_{\psi}(h_i)\otimes\LL_{\rho^{de/c}}\otimes\UU_n\otimes\UU_m$.
\end{lemma}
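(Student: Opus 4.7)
My plan is to follow the proof of Lemma \ref{l1} line-for-line, with the modifications forced by the new Laurent polynomial $H(z,t)=t^{-de/c}(f(t^{e'}z^{-b})+g(t^{d'}z^{-a}))$. First I would rewrite
$$
\HH\cong\LL_\psi(t^{de/c}H(z,t))\otimes\LL_{\chi^b\xi^{-a}}(z)\otimes\UU_n(z^{-b}t^{e'})\otimes\UU_m(z^{-a}t^{d'})
$$
and introduce the finite $S$-morphism $\theta:\PP^1_S\to\PP^1_S$ sending $(z,t)\mapsto (H(z,t),t)$. Using that vanishing cycles commute with proper pushforward together with the projection formula reduces the question to
$$
\R\Phi(\LL_\psi(t^{de/c}u)\otimes{\mathcal J}),\quad\text{with }{\mathcal J}=\theta_\ast(\LL_{\chi^b\xi^{-a}}(z)\otimes\UU_n(z^{-b}t^{e'})\otimes\UU_m(z^{-a}t^{d'})).
$$
Universal strong local acyclicity of $\LL_\psi(tu)$ with respect to the second projection \cite[1.3.1.2--1.3.1.3]{laumon1987transformation} then confines the support to $\overline W\cup\{\infty\}$, where $W=\theta(Z)$ and $Z=\{z_i(t^{-1})\}$ is the set of $bd-ae$ Hensel lifts of the simple critical points $\alpha_i$ of $\tilde H$.

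The heart of the argument is the local analysis at each $(\alpha_i,\infty)$. The critical points are non-degenerate: a direct calculation gives $\tilde H''(\alpha_i)=\alpha_i^{-bd-2}\,bd\,f_d\,(bd-ae)$, which is nonzero precisely because $b,d,f_d$ are nonzero, $\alpha_i\neq 0$, and $p\nmid bd-ae$. This verifies the hypothesis of \cite[Lemma 1.3]{fu2010calculation}, producing an $S$-isomorphism $\phi$ with $\omega\circ\phi=H-\tilde h_i$, where $\omega:z\mapsto z^2$ and $\tilde h_i(t)=H(z_i(t^{-1}),t)$. Exactly as in Lemma \ref{l1}, this gives $\theta_{i\ast}\Ql\cong\Ql\oplus\LL_\rho(u-\tilde h_i(t))$; since $\UU_n(z^{-b}t^{e'})$ and $\UU_m(z^{-a}t^{d'})$ restrict to $\UU_n(t)$ and $\UU_m(t)$ on the punctured henselization at $(\alpha_i,\infty)$, I extract an injection
$$
\LL_\rho(u-\tilde h_i(t))\otimes\UU_n(t)\otimes\UU_m(t)\hookrightarrow{\mathcal J}_i.
$$

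Applying $\R^1\Phi(\LL_\psi(t^{de/c}u)\otimes -)$, the projection formula, the translation $u\mapsto u+\tilde h_i(t)$, and the vanishing cycles base change along $t\mapsto t^{de/c}$ reduce the computation to $\R^1\Phi(\LL_\psi(tu)\otimes\LL_\rho(u))_{(0,\infty)}$, which is Laumon's local Fourier transform applied to $\LL_\rho$, and is known to be $\LL_\rho$ itself \cite[2.5.3.1]{laumon1987transformation}. This produces the desired injections
$$
\LL_\psi(h_i)\otimes\LL_{\rho^{de/c}}\otimes\UU_n\otimes\UU_m\hookrightarrow\R^1\Phi(\HH)_{(\alpha_i,\infty)}.
$$
Summing over $i=0,\ldots,bd-ae-1$ and invoking that $LC_{(0,\infty)}^{\infty}(\iota^\ast\FF,\GGG)$ has total dimension $(bd-ae)mn$ by \cite[Proposition 13]{rojas2013local} forces each injection to be an isomorphism; this simultaneously yields $\R^i\Phi(\HH)=0$ for $i\neq 1$, pins down the support of $\R^1\Phi(\HH)$ as exactly $\{(\alpha_i,\infty)\}$, and identifies each stalk with the stated representation. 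The only genuinely new step compared to Lemma \ref{l1} is the non-degeneracy check for $\tilde H''(\alpha_i)$, and I expect this to be the only place where real care is needed, since it is the sole point at which the hypothesis $p\nmid bd-ae$ (rather than $p\nmid bd+ae$) is invoked; fortunately the check is a one-line calculation.
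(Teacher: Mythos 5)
Your proposal is correct and follows essentially the same route as the paper, which indeed proves this lemma by declaring the argument identical to that of Lemma~\ref{l1}; your explicit computation $\tilde H''(\alpha_i)=\alpha_i^{-bd-2}bdf_d(bd-ae)$ correctly isolates the one genuinely new verification, namely where the hypothesis $p\nmid bd-ae$ enters the non-degeneracy condition needed for \cite[Lemma 1.3]{fu2010calculation}. One small correction: the vanishing $\R^i\Phi(\HH)=0$ for $i\neq 1$ does not follow from the dimension count (which only controls degree $1$), but, exactly as in the proof of Lemma~\ref{l1}, from the punctuality of $\R\Phi(\HH)$ combined with the a priori concentration $\R^i\bar\pi_{2\ast}\HH=0$ for $i\neq 1$ (the analogue of \cite[Theorem 5.1.1]{katz1988gauss} in this setting), so keep that step from the template rather than attributing it to the final counting argument.
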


Let $\mu$ be the group of $(bd'-ae')$-th roots of unity. It acts on $\eta$ by multiplication, and the sheaf $[bd'-ae']^\ast((\iota^\ast\FF)\ast\GGG)$ on $S$ is equivariant for this action. We lift this action to an action on $\GG_{m,\eta}$ by defining $\zeta\cdot z=\zeta^{\alpha d'+\beta e'}z$ for $\zeta\in\mu$. Then the sheaf
$$
\LL_{\psi}(f(z^{-b}t^{e'})+g(z^{-a}t^{d'}))\otimes\KK_{\chi,n}(z^{-b}t^{e'})\otimes\KK_{\xi,m}(z^{-a}t^{d'})
$$
is invariant under this action, and the action of $\mu$ on $[bd'-ae']^\ast((\iota^\ast\FF)\ast\GGG)\cong  \R^1\pi_{2!}\left(\LL_{\psi}(f(z^{-b}t^{e'})+g(z^{-a}t^{d'}))\otimes\KK_{\chi,n}(z^{-b}t^{e'})\otimes\KK_{\xi,m}(z^{-a}t^{d'})\right)$ is induced by its action on the pair $$(\GG_{m,\eta},
\LL_{\psi}(f(z^{-b}t^{e'})+g(z^{-a}t^{d'}))\otimes\KK_{\chi,n}(z^{-b}t^{e'})\otimes\KK_{\xi,m}(z^{-a}t^{d'}))
$$
Since
$$
\R^1\pi_{2!}\left(\LL_{\psi}(f(z^{-b}t^{e'})+g(z^{-a}t^{d'}))\otimes\KK_{\chi,n}(z^{-b}t^{e'})\otimes\KK_{\xi,m}(z^{-a}t^{d'})\right)\cong
$$
$$
\cong\LL_{\chi^{e'}\xi^{d'}}\otimes
\HHH^1(\PP^1_{\infty},\R\Phi(\HH))\cong\bigoplus_{i=0}^{bd-ae-1}\LL_{\chi^{e'}\xi^{d'}}\otimes\R^1\Phi(\HH)_{(\alpha_i,\infty)}
$$
and the action of $\zeta\in\mu$ takes $(\alpha_i,\infty)$ to $(\zeta^{\alpha d'+\beta e'}\alpha_i,\infty)=(\alpha_{i+c(\alpha d'+\beta e')},\infty)=(\alpha_{i+\alpha d+\beta e},\infty)$, the action of $\zeta$ permutes the summands of $\LL_{\chi^{e'}\xi^{d'}}\otimes\HHH^1(\PP^1_{\infty},\R\Phi(\HH))$ by taking $\LL_{\chi^{e'}\xi^{d'}}\otimes\R^1\Phi(\HH)_{(\alpha_i,\infty)}$ to $\LL_{\chi^{e'}\xi^{d'}}\otimes\R^1\Phi(\HH)_{(\alpha_{i+\alpha d+\beta e},\infty)}$. As in the previous section, since $bd'-ae'$ and $\alpha d' +\beta e'$ are relatively prime, we conclude that $\mu$ acts freely on the set of summands of $\LL_{\chi^{e'}\xi^{d'}}\otimes\HHH^1(\PP^1_{\infty},\R\Phi(\HH))$. 

Therefore
$$
LC_{(0,\infty)}^{\infty}(\iota^\ast\FF,\GGG)\cong[bd'-ae']_\ast\left(\bigoplus_{i=0}^{c-1}\LL_\psi(h_i)\otimes\LL_{\rho^{de/c}}\otimes\LL_{\chi^{e'}\xi^{d'}}\otimes\UU_n\otimes\UU_m\right).
$$
This proves Theorem \ref{t2}, since $de/c$ is even if and only if $de$ is.

\section{Local Fourier transform}

The Fourier transform is related to the multiplicative convolution via the formula \cite[Proposition 8.1.12]{katz1990esa}
$$
K\ast\LL_\psi\cong j_!(\mathrm{FT}_\psi(\iota^\ast j^\ast K))
$$
for every object $K\in\Dbc(\Gm,\Ql)$, where $j:\Gm\to\AAA^1$ is the inclusion. Using this, we can recover the explicit formulas for the local Fourier transform given by Fu in \cite{fu2010calculation} from our formulas for the local convolution, removing the hypothesis that $p$ is large enough:

\begin{corollary}
 Let $\FF=[a]_\ast(\LL_\psi(f(1/t))\otimes\KK_{\chi,n})$ be a representation of $I_0$, where $f\in k[t]$ has degree $d$. Suppose that $a$, $d$ and $a+d$ are prime to $p$. Let $z(t^{-1})\in k[[t^{-1}]]$ be a root of $z^{a+1}t^{-(d-1)}f'(tz)-a\in k[[t^{-1}]][z,z^{-1}]$, and let $g(t)=f(t\cdot z(t^{-1}))+t^d/z(t^{-1})^a\in t^d\cdot k[[t^{-1}]]$. Then
$$
\mathrm{FT}_{(0,\infty)}\FF\cong [d+a]_\ast(\LL_\psi(g)\otimes\KK_{\bar\chi,n}\otimes\LL_{\rho^d})
$$
\end{corollary}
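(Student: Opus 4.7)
The plan is to deduce the corollary as a direct specialization of Theorem \ref{t1}. The identity $K\ast\LL_\psi\cong j_!(\mathrm{FT}_\psi(\iota^\ast j^\ast K))$ of \cite[Proposition 8.1.12]{katz1990esa}, together with Laumon's stationary phase decomposition on the Fourier side, yields the local identification
$$
\mathrm{FT}_{\psi}^{(0,\infty)}\FF \;\cong\; LC_{(\infty,\infty)}^{\infty}\bigl(\iota^\ast\FF,\,(\LL_\psi)_\infty\bigr)
$$
for any $I_0$-representation $\FF$. Granting this, the corollary reduces to a computation of the right-hand side via Theorem \ref{t1}.

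Next, I would write $\iota^\ast\FF$ in the standard form. Since $\iota^\ast$ commutes with $[a]_\ast$ and sends $\LL_\psi(f(1/t))$ to $\LL_\psi(f(t))$, $\LL_\chi$ to $\LL_{\bar\chi}$, and fixes $\UU_n$, one obtains $\iota^\ast\FF\cong[a]_\ast(\LL_\psi(f)\otimes\KK_{\bar\chi,n})$, which fits the hypotheses of Theorem \ref{t1} with polynomial $f$ of degree $d$ and character $\bar\chi$. Similarly $(\LL_\psi)_\infty=\LL_\psi(t)$ fits those hypotheses with $b=e=m=1$, $g(t)=t$, and trivial $\xi$. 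The derived parameters specialize to $bd+ae=d+a$ (prime to $p$ by hypothesis), $c=\gcd(d,1)=1$, $d'=d$, $e'=1$, and in particular the direct sum over $i=0,\dots,c-1$ in Theorem \ref{t1} collapses to a single term. The Laurent polynomial of Theorem \ref{t1} becomes $H(z,t)=t^{-d}(f(tz)+t^{d}z^{-a})$, whose critical-point equation $\partial H/\partial z=0$ rearranges to $z^{a+1}t^{-(d-1)}f'(tz)-a=0$, matching the defining equation of $z(t^{-1})$ in the corollary; one then checks $h_0(t)=t^{d}H(z_0(t^{-1}),t)=f(tz_0(t^{-1}))+t^{d}/z_0(t^{-1})^{a}$, which coincides with the polynomial $g(t)$ of the statement. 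Using $\rho^{de}=\rho^{d}$, $\chi^{e'}\xi^{d'}=\bar\chi$, and $\UU_n\otimes\UU_1=\UU_n$, Theorem \ref{t1} outputs exactly $[d+a]_\ast(\LL_\psi(g)\otimes\KK_{\bar\chi,n}\otimes\LL_{\rho^{d}})$, which is the claimed formula.

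The main obstacle here is not the matching of polynomials, which is pure bookkeeping, but the initial local identification of $\mathrm{FT}_\psi^{(0,\infty)}$ with $LC_{(\infty,\infty)}^{\infty}(\iota^\ast(-),\LL_\psi(t))$. To establish it one picks a realization $K$ on $\Gm$ with $K_\infty=\iota^\ast\FF$, and uses $K\ast\LL_\psi\cong j_!(\mathrm{FT}_\psi(\iota^\ast j^\ast K))$ to match the $(\infty,\infty)$ contribution at infinity on the convolution side against the $(0,\infty)$ Fourier piece at infinity on the Fourier side (extracted by Laumon's stationary phase), while arranging that the auxiliary contributions -- those coming from $K_0$ on one side and the slope-$\leq 1$ part of $(\iota^\ast j^\ast K)_\infty$ on the other -- are either tame or cancel. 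This is precisely the content of \cite[Proposition 8.1.12]{katz1990esa}; once in hand, the specialization of Theorem \ref{t1} above concludes the proof under the stated hypothesis that $a$, $d$, and $a+d$ are prime to $p$.
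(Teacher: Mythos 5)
Your proposal is correct and follows essentially the same route as the paper: the paper likewise deduces the corollary as the special case of Theorem \ref{t1} applied to $\iota^\ast\FF\cong[a]_\ast(\LL_\psi(f)\otimes\KK_{\bar\chi,n})$ and $\GGG=\LL_\psi$ (so $b=e=m=1$, $c=1$), with the identification of $\mathrm{FT}^{(0,\infty)}$ with the corresponding local convolution piece supplied by \cite[Proposition 8.1.12]{katz1990esa}; your parameter bookkeeping ($H(z,t)=t^{-d}(f(tz)+t^dz^{-a})$, the critical-point equation, $h_0=g$, $\rho^{de/c}=\rho^d$, $\chi^{e'}\xi^{d'}=\bar\chi$) all checks out.
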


\begin{proof} This is just a particular case of Theorem \ref{t1}, applied to $\iota^\ast\FF\cong [a]_\ast(\LL_\psi(f)\otimes\KK_{\bar\chi,n})$ and $\LL_\psi$. Note that $d$ and $e=1$ are relatively prime in this case, so $c=1$.
\end{proof}

Let us check that this result is equivalent to \cite[Theorem 0.2]{fu2010calculation}: if $z(t^{-1})$ is a root of $z^{a+1}t^{-(d-1)}f'(tz)-a$, then $t=1/t'z(t'^{-1})$ satisfies
$$
\frac{d}{dt}f(1/t)+at^{a-1}t'^{a+d}=0
$$
and $f(1/t)+t^at'^{a+d}$ coincides with the $g$ in the corollary. Note that \cite[Theorem 0.2]{fu2010calculation} has the additional hypothesis that $p>d$, which is not needed here.

Similarly, we can deduce a formula for the local Fourier transform $\mathrm{FT}_{(\infty,\infty)}$:

\begin{corollary}
 Let $\FF=[a]_\ast(\LL_\psi(f(t))\otimes\KK_{\chi,n})$ be a representation of $I_\infty$, where $f\in k[t]$ has degree $d$ and $d>a$. Suppose that $a$, $d$ and $d-a$ are prime to $p$. Let $z(t^{-1})\in k[[t^{-1}]]$ be a root of $z^{a-1}t^{-(d-1)}f'(t/z)+a\in k[[t^{-1}]][z,z^{-1}]$, and let $g(t)=f(t/z(t^{-1}))+t^d/z(t^{-1})^a\in t^d\cdot k[[t^{-1}]]$. Then
$$
\mathrm{FT}_{(\infty,\infty)}\FF\cong [d-a]_\ast(\LL_\psi(g)\otimes\KK_{\chi,n}\otimes\LL_{\rho}^d)
$$
\end{corollary}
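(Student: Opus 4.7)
My plan is to deduce this corollary from Theorem \ref{t2} in exactly the same way the previous corollary was deduced from Theorem \ref{t1}, by specializing the second argument of the local convolution to $\LL_\psi$.

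First, I would apply the identity $K \ast \LL_\psi \cong j_!(\mathrm{FT}_\psi(\iota^\ast j^\ast K))$ of \cite[Proposition 8.1.12]{katz1990esa} to a sheaf $K$ on $\Gm$ whose $0$-monodromy realizes $\iota^\ast \FF$ and whose $\infty$-monodromy is tame. The sheaf $\iota^\ast j^\ast K$, viewed on $\AAA^1$ via $j_!$, then has $\infty$-monodromy $\FF$ itself (of slope $d/a>1$) and tame $0$-monodromy. Taking the $I_\infty$-monodromy of both sides of the identity and isolating the slope-$>1$ part, Laumon's stationary-phase decomposition identifies the contribution from the Fourier side with $\mathrm{FT}_{(\infty,\infty)}\FF$, while on the convolution side the only nonvanishing term is $LC_{(0,\infty)}^{\infty}(\iota^\ast\FF, \LL_\psi)$ (the summand $LC_{(\infty,\infty)}^{\infty}(K_\infty, \LL_\psi)$ vanishes because $K_\infty$ was chosen tame). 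This yields the key identity
$$
\mathrm{FT}_{(\infty,\infty)}\FF \cong LC_{(0,\infty)}^{\infty}(\iota^\ast\FF, \LL_\psi).
$$

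Next, I would apply Theorem \ref{t2} with parameters $b=e=m=1$, $\xi$ trivial and $g(t)=t$ (so that $\GGG=\LL_\psi$). The hypothesis $bd>ae$ of the theorem is precisely the assumption $d>a$ of the corollary, and $c=\gcd(d,1)=1$, so $d'=d$ and $e'=1$. A direct substitution gives the auxiliary Laurent polynomial $H(z,t)=t^{-d}f(t/z)+z^{-a}$, whose partial derivative with respect to $z$ vanishes exactly on the equation $z^{a-1}t^{-(d-1)}f'(t/z)+a=0$ appearing in the statement. The corresponding $h_0(t)=t^d H(z(t^{-1}),t)=f(t/z(t^{-1}))+t^d/z(t^{-1})^a$ coincides with the polynomial $g$ of the corollary.

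Finally, the character data in Theorem \ref{t2} specialize to $\LL_{\chi^{e'}\xi^{d'}}=\LL_\chi$, the sign character to $\LL_{\rho^{de}}=\LL_{\rho^d}$, and $\UU_n\otimes\UU_m=\UU_n$. Combined, this gives exactly $[d-a]_\ast(\LL_\psi(g)\otimes\KK_{\chi,n}\otimes\LL_\rho^d)$, as claimed. The only genuinely subtle point is the slope bookkeeping in the first step: the assumption $d>a$ ensures that both $\mathrm{FT}_{(\infty,\infty)}\FF$ and $LC_{(0,\infty)}^{\infty}(\iota^\ast\FF,\LL_\psi)$ land in the slope-$>1$ range (in fact exactly slope $d/(d-a)$), so no extraneous slope-$\leq 1$ terms from other local Fourier or convolution pieces can interfere.
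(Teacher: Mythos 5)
Your proposal is correct and follows essentially the same route as the paper: the paper's proof is precisely to invoke the identity $K\ast\LL_\psi\cong j_!(\mathrm{FT}_\psi(\iota^\ast j^\ast K))$ from \cite[Proposition 8.1.12]{katz1990esa} and then apply Theorem \ref{t2} to $\iota^\ast\FF$ and $\LL_\psi$ (i.e.\ $b=e=m=1$, $\xi$ trivial, $g(t)=t$, so $c=1$, $d'=d$, $e'=1$), exactly as you do. Your additional slope bookkeeping identifying $\mathrm{FT}_{(\infty,\infty)}\FF$ with $LC_{(0,\infty)}^{\infty}(\iota^\ast\FF,\LL_\psi)$ is the step the paper leaves implicit, and it is carried out correctly.
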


\begin{proof} This is just a particular case of Theorem \ref{t2}, applied to $\iota^\ast\FF$ and $\LL_\psi$.
\end{proof}

Again, this result is equivalent to \cite[Theorem 0.3]{fu2010calculation}, with the advantadge that the hypothesis $p>d$ is not needed.

By using the recursive formulas given in \cite{rojas2013local}, from the formulas for the local convolutions $LC_{(\infty,\infty)}^\infty$ and $LC_{(0,\infty)}^\infty$ and the ones for the local Fourier transform, one can derive explicit formulas for the remaining local convolution functors $LC_{(a,b)}^c$.

\bibliographystyle{amsalpha}
\bibliography{bibliography}

\end{document}